\theoremstyle{plain}
\newtheorem{theorem}                 {\bf Theorem}        [section]
\newtheorem{proposition}  [theorem]  {\bf Proposition}
\newtheorem{lemma}        [theorem]  {\bf Lemma}
\theoremstyle{definition}
\newtheorem{definition}   [theorem]  {\bf Definition}
\numberwithin{equation}{section}
\begin{document}

\def\nab#1#2#3{\nabla^{\hbox{$\scriptstyle{#1}$}}_{\hbox{$\scriptstyle{#2}$}}{\hbox{$#3$}}}
\def\nabl#1#2{\nabla_{\hbox{$\scriptstyle{#1}$}}{\hbox{$#2$}}}
\def\tnabl#1#2{\hat{\nabla}_{\hbox{$\scriptstyle{#1}$}}{\hbox{$#2$}}}
\def\nnab#1{\nabla_{\hbox{$\scriptstyle{#1}$}}}
\def\tnab#1#2{\widetilde{\nabla}_{\hbox{$\scriptstyle{#1}$}}\hbox{$#2$}}
\def\rn{\mathbb{R}}
\def\cn{\mathbb{C}}
\def\zn{\mathbb{Z}}
\def\rk{\mathbb{K}}
\def\ci{\mathcal{I}}
\def\cL{\mathcal{L}}
\def\v{\mathfrak{v}}
\def\b{\mathfrak{b}}
\def\fr{\mathfrak{r}}
\def\z{\mathfrak{z}}
\def\g{\mathfrak{g}}
\def\gl{\mathfrak{gl}}
\def\sl{\mathfrak{sl}}
\def\k{\mathfrak{k}}
\def\p{\mathfrak{p}}
\def\h{\mathfrak{h}}
\def\s{\mathfrak{s}}
\def\so{\mathfrak{so}}
\def\su{\mathfrak{su}}
\def\n{\mathfrak{n}}
\def\m{\mathfrak{m}}
\def\a{\mathfrak{a}}
\def\V{\mathcal{V}}
\def\H{\mathcal{H}}
\def\E{\mathcal{E}}
\def\L{\mathfrak{L}}
\def\B{\mathcal{B}}
\def\Hi{\mathfrak{H}}
\def\U{\mathfrak{U}}
\def\F{\mathcal{F}}
\def\tr{\textrm{\upshape{trace}}}
\def\sp{\textrm{\upshape{span}}}
\def\grad{\textrm{\upshape{grad}}}
\def\div{\textrm{\upshape{div}}}
\def\ad{\textrm{\upshape{ad}}}
\def\Ad{\textrm{\upshape{Ad}}}
\def\Aut{\textrm{\upshape{Aut}}}
\def\Inn{\textrm{\upshape{Inn}}}
\def\Int{\textrm{\upshape{Int}}}
\def\Iso{\textrm{\upshape{Iso}}}
\def\Ker{\textrm{\upshape{Ker}}}
\def\Der{\textrm{\upshape{Der}}}
\def\dim{\textrm{\upshape{dim}}}
\def\Re{\textrm{\upshape{Re}}}
\def\id{\textrm{\upshape{id}}}
\def\Im{\textrm{\upshape{Im}}}
\def\End{\textrm{\upshape{End}}}
\def\rank{\textrm{\upshape{rank}}}
\def\pd#1{\frac{\partial}{\partial #1}}
\def\dd#1{\frac{\textrm{\upshape{d}}}{\textrm{\upshape{d}} #1}}
\def\SPE#1#2{\langle #1,#2\rangle}
\def\SPC#1#2{\left<#1,#2\right>}
\def\SPH#1#2{\left(#1,#2\right)}
\def\SPW#1#2{\llangle#1,#2\rrangle}
\def\SPS#1#2{\langlebar#1,#2\ranglebar}
\def\d{\textrm{\upshape{d}}}
\def\pro{\textsc{Proof}}

\title{Harmonic morphisms and eigenfamilies on the\\
exceptional Lie group $G_{2}$}

\date{}

\author{Jonas Nordström}


\keywords{harmonic morphisms, minimal submanifolds, Lie groups}

\subjclass[2000]{58E20, 53C43, 53C12}

\address
{Department of Mathematics, Faculty of Science, Lund University,
Box 118, S-221 00 Lund, Sweden}
\email{Jonas.Nordstrom@math.lu.se}

\begin{abstract}

We construct harmonic morphisms on the compact simple Lie group $G_{2}$. The construction
uses eigenfamilies in a representation theoretic scheme.
\end{abstract}

\maketitle

\Large
\section{Introduction}

A harmonic morphism is a map between two Riemannian manifolds
that pulls back local real-valued harmonic functions to local real-valued harmonic functions.
The simplest examples of harmonic morphisms are constant maps,
real-valued harmonic functions and isometries.
A characterization of harmonic morphisms was given by Fuglede and Ishihara,
they show in \cite{Fuglede} and \cite{Ishihara}, respectively, that the harmonic
morphisms are exactly the harmonic horizontally weakly conformal maps. 
If we restrict our attention to the maps where the codomain is a surface then
the harmonic morphisms are horizontally weakly conformal maps with minimal fibers.
Good references for harmonic morphisms and there properties are \cite{Bai-Woo-book} and \cite{Gud-bib}.

In \cite{GudSack}, Gudmundsson and Sakovich develop a method for constructing complex-valued harmonic morphism on
the classical compact Lie groups by looking at eigenfamilies, simultaneous
eigenfunction of the Laplacian and the conformality operator, see Theorem \ref{HowToHarMor}.
We give some further properties of such eigenfamilies and extend the concept to maximal eigenfamilies.

In \cite{Gud-Sven-Ville}, the authors show how to construct these eigenfamilies using
representation theory. The Peter-Weyl theorem gives the eigenfunctions of the Laplacian
and decompositions into irreducible representation and Schur's lemma gives eigenfunctions
of the conformality operator. We show in this paper that the method
works for the exceptional Lie group $G_{2}$ as well.
We make use of the $7$-dimensional cross product to control the behavior of the
conformality operator. Our main result is the following.

\begin{theorem}\label{MainThm}
Let $\rho:G_{2}\to\Aut(\cn^{7})$ be the standard representation of the exceptional compact Lie group $G_{2}$.
For any $a,b\in\cn^{7}$ define
\[\phi_{ab}(g)=\SPE{\rho(g)a}{b},\]
where $\SPE{\cdot}{\cdot}$ is the complex-bilinear extension of the standard scalar product on $\rn^{7}$.
Suppose $p\in\cn^{7}$ satisfies $\SPE{p}{p}=0$. Then
\[\E_{p}=\{\phi_{ap}\,|\,a\in\cn^{7}\}\]
is an eigenfamily on $G_{2}$, i.e. there exist $\lambda,\mu\in\rn$ such that
\[\Delta(\phi)=\lambda\phi\quad\textrm{and}\quad\kappa(\phi,\psi)=\mu\phi\psi\]
for all $\phi,\psi\in\E_{p}$.
\end{theorem}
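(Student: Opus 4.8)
The plan is to reduce both requirements to a single algebraic identity for the Lie algebra representation $d\rho$ and then to exploit the $7$-dimensional cross product on $\rn^{7}$. Fix the bi-invariant metric on $G_{2}$ and an associated orthonormal basis $\{Z_{1},\dots,Z_{14}\}$ of left-invariant vector fields, identified with an orthonormal basis of $\g_{2}$. Since the standard representation $\rho$ on $\cn^{7}$ is irreducible, every matrix coefficient $\phi_{ab}$ is, by the Peter--Weyl theory, an eigenfunction of the Laplace--Beltrami operator with eigenvalue the Casimir eigenvalue of $\rho$; hence $\Delta\phi=\lambda\phi$ for all $\phi\in\E_{p}$ with one common $\lambda$, and only $\kappa$ needs attention. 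Differentiating along $Z$ gives $Z(\phi_{ab})(g)=\SPE{\rho(g)\,d\rho(Z)a}{b}=\phi_{(d\rho(Z)a)b}(g)$, and combining this with the formula $\kappa(\phi,\psi)=\sum_{i}Z_{i}(\phi)\,Z_{i}(\psi)$, the $G_{2}$-invariance of $\SPE{\cdot}{\cdot}$ (so that $\rho(g)^{-1}=\rho(g^{-1})$ is the transpose), and the skew-symmetry of $d\rho(Z)$, one obtains for $q:=\rho(g^{-1})p$
\[\kappa(\phi_{ap},\phi_{cp})(g)=\sum_{i=1}^{14}\SPE{a}{d\rho(Z_{i})q}\,\SPE{c}{d\rho(Z_{i})q}.\]
As $\SPE{q}{q}=\SPE{p}{p}=0$ and $\SPE{a}{q}=\phi_{ap}(g)$, $\SPE{c}{q}=\phi_{cp}(g)$, the theorem is equivalent to the claim that
\[\sum_{i=1}^{14}\big(d\rho(Z_{i})q\big)\otimes\big(d\rho(Z_{i})q\big)=\mu\,q\otimes q\quad\text{in }\mathrm{Sym}^{2}\cn^{7}\]
for every isotropic $q\in\cn^{7}$ and one fixed $\mu\in\rn$.

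The reason such a $\mu$ exists is representation-theoretic: $q\mapsto\sum_{i}(d\rho(Z_{i})q)\otimes(d\rho(Z_{i})q)$ is a $G_{2}$-equivariant quadratic map $\cn^{7}\to\mathrm{Sym}^{2}\cn^{7}$, hence a $G_{2}$-equivariant endomorphism of $\mathrm{Sym}^{2}\cn^{7}$, and $\mathrm{Sym}^{2}\cn^{7}$ decomposes as the trivial module (spanned by the metric) plus the irreducible $27$-dimensional module. For isotropic $q$ the tensor $q\otimes q$ is trace-free, i.e.\ $\tr(q\otimes q)=\SPE{q}{q}=0$, so it lies in the $27$-dimensional summand, and Schur's lemma forces the sum to be a scalar multiple of $q\otimes q$, the scalar being real since everything is defined over $\rn$. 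It is exactly here that the hypothesis $\SPE{p}{p}=0$ is used: it kills the trivial component.

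To make this self-contained and to determine $\mu$, I would use the $G_{2}$-invariant splitting $\so(7)=\g_{2}\oplus\m$, where the $7$-dimensional complement $\m$ is classically identified with $\{L_{u}\,|\,u\in\rn^{7}\}$, $L_{u}(x)=u\times x$, the map $u\mapsto L_{u}$ being a scalar multiple of an isometry onto $\m$. Then $\sum_{i}(d\rho(Z_{i})q)\otimes(d\rho(Z_{i})q)$ is the difference between the analogous sum over an orthonormal basis of $\so(7)$ and the one over an orthonormal basis of $\m$. The $\so(7)$-sum is an elementary computation with the basis $\{(e_{j}\otimes e_{k}^{\ast}-e_{k}\otimes e_{j}^{\ast})/\sqrt{2}\}_{j<k}$ and, because $\SPE{q}{q}=0$, collapses to a multiple of $q\otimes q$. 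For the $\m$-sum, completeness of an orthonormal basis $\{e_{j}\}$ of $\rn^{7}$ together with the total antisymmetry of the associative $3$-form turns $\sum_{j}\SPE{a}{e_{j}\times q}\,\SPE{c}{e_{j}\times q}$ into $\SPE{q\times a}{q\times c}$, and the standard identity
\[\SPE{q\times a}{q\times c}=\SPE{q}{q}\SPE{a}{c}-\SPE{q}{c}\SPE{a}{q}+\psi(q,a,q,c)\]
(with $\psi$ the coassociative $4$-form) collapses, since $\SPE{q}{q}=0$ and $\psi$ is alternating with the repeated argument $q$, to $-\SPE{a}{q}\SPE{c}{q}$; thus the $\m$-sum is a multiple of $-q\otimes q$. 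Subtracting yields the identity with an explicit real $\mu$. I expect the only real difficulty to be bookkeeping: making the orthonormal bases of $\g_{2}$, $\m$ and $\so(7)$ mutually compatible with the chosen normalization of the metric on $G_{2}$, so that the two subtracted constants are computed consistently; the geometric and representation-theoretic content is entirely in the observations above.
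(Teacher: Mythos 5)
Your proposal is correct, and it reproduces the paper's computational core while adding one genuinely different ingredient. The explicit evaluation is essentially the paper's: the author also splits $\so(7)=\g_{2}\oplus W$ with $W$ identified with $\{L_{u}:x\mapsto u\times x\}$, shows $\SPS{L_{u}}{L_{v}}=6\SPE{u}{v}$ and $P_{R(W)}(R(a\wedge b))=\tfrac13 L_{a\times b}$, and subtracts the $W$-contribution from the full $\so(7)$-contribution to get $Q(a\wedge b,c\wedge d)=\tfrac12\SPW{a\wedge b}{c\wedge d}-\tfrac16\SPE{a\times b}{c\times d}$; your $\m$-sum manipulation via $\SPE{a}{e_{j}\times q}=\SPE{q\times a}{e_{j}}$ and the double-cross-product identity is the same computation packaged slightly differently (the paper polarizes $\SPE{a\times u}{b\times v}+\SPE{a\times v}{b\times u}$ instead of invoking the coassociative $4$-form, but with a repeated argument these coincide). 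The two presentational differences are: (i) you pull the group element back onto $p$, working in $\mathrm{Sym}^{2}\cn^{7}$ with $q=\rho(g^{-1})p$, whereas the paper pushes it onto $a,b$ and works with the form $Q$ on $\bigwedge^{2}V^{\cn}$ — equivalent up to the skew/symmetric bookkeeping; the paper's version yields the cleaner statement $Q(a\wedge p,b\wedge p)=\tfrac13\SPW{a\wedge p}{b\wedge p}$ valid for \emph{all} $p$, with isotropy used only at the last step to turn $\SPW{\rho(g)a\wedge p}{\rho(g)b\wedge p}$ into $-\phi_{ap}\phi_{bp}$, while in your formulation isotropy is needed already to kill the trivial component. (ii) Your Schur's-lemma argument via $\mathrm{Sym}^{2}\cn^{7}=\cn\oplus V_{27}$ is not in the paper at all; it proves existence of $\mu\in\rn$ (hence the theorem as stated) with no computation, at the price of quoting the irreducibility of the $27$-dimensional module, and it makes transparent why the hypothesis $\SPE{p}{p}=0$ is exactly the right one. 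The only point to be careful about — which you correctly flag — is that the explicit value $\mu=-\tfrac13$ requires the orthonormal basis of $\g_{2}$ to be taken with respect to the metric for which $\sigma:\g_{2}\to\so(7)$ is an isometry onto its image under $\SPS{A}{B}=\tr(A^{t}B)$, which is the normalization the paper uses.
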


\section{Maximal eigenfamilies}

In this section we present the notion of an eigenfamily introduced in \cite{GudSack}. We give some
further properties of eigenfamilies and extend the notion to
maximal eigenfamilies, with the hope that this may help with future classifications.

Let $(M,g)$ be a Riemannian manifold.
For functions $\phi,\psi:(M,g)\to \cn$ we define the \textrm{Laplacian} $\Delta$ and the
\textrm{conformality} operator $\kappa$ by
\[\Delta(\phi)=\div(\grad(\phi))\]
and
\[\kappa(\phi,\psi)=g(\grad(\phi),\grad(\psi)),\]
here $g$ is the complex-bilinear extension of the metric $g$ and $\grad(\phi),\grad(\psi)$
are sections of the complexified tangent bundle $T^{\cn}M$.
It is well know that the Laplacian acts as
\[\Delta(\phi\psi)=\psi\Delta(\phi)+2\kappa(\phi,\psi)+\phi\Delta(\psi)\]
on a product of two functions.
If $\kappa(\phi,\phi)=0$ then $\phi$ is horizontally conformal and if $\Delta(\phi)=0$ then $\phi$ is a harmonic map and
if $\kappa(\phi,\phi)=\Delta(\phi)=0$ then $\phi$ is a harmonic morphism.

Given $\lambda\in\cn$ let $\F_{\lambda}$ be the eigenfunctions of the Laplacian with eigenvalue $\lambda$
\[\F_{\lambda}=\{\phi:(M,g)\to\cn\,|\,\Delta(\phi)=\lambda\phi\}\]
and given $\lambda,\mu\in\cn$ let $\F_{\lambda,\mu}$ be the simultaneous eigenfunctions of
the Laplacian and the conformality operator with eigenvalues $\lambda$ and $\mu$ respectively,
\[\F_{\lambda,\mu}=\{\phi\in \F_{\lambda}\,|\,\kappa(\phi,\phi)=\mu\phi\phi\}.\]
Even if $\phi,\psi\in\F_{\lambda,\mu}$ it may still happen that $\kappa(\phi,\psi)\neq \mu\phi\psi$.

\begin{definition}\cite{GudSack}
Let $(M,g)$ be a Riemannian manifold and $\lambda,\mu\in\cn$.
A subset $\E_{\lambda,\mu}$ of $\F_{\lambda,\mu}$ such that
$\kappa(\phi,\psi)=\mu\phi\psi$ for all $\phi,\psi\in\E_{\lambda,\mu}$ is said to
be an \textbf{eigenfamily}.
\end{definition}

In fact, a subset $\E\subseteq\F_{\lambda,\mu}$ is an eigenfamily if and only if, $\phi,\psi\in\E$ imply
$\phi+\psi\in\F_{\lambda,\mu}$, since
\[\kappa(\phi,\psi)=\frac{1}{2}(\kappa(\phi+\psi,\phi+\psi)-\kappa(\phi,\phi)-\kappa(\psi,\psi))=\mu\phi\psi.\]

$\F_{\lambda,\mu}$ is closed under composition by isometries, i.e. if $\phi\in\F_{\lambda,\mu}$
and $F$ is an isometry then $F^{*}(\phi)=\phi\circ F\in\F_{\lambda,\mu}$. 

\begin{proposition}
Let $(M,g)$ be a Riemannian manifold and $\E_{\lambda,\mu}$ be an eigenfamily on $M$.
Then $\sp_{\cn}\E_{\lambda,\mu}$ and $\bar{\E}_{\lambda,\mu}=\{\bar{\phi}|\phi\in\E_{\lambda,\mu}\}$
are also eigenfamilies.
\end{proposition}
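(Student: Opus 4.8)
The plan is to reduce everything to the elementary algebraic features of the two operators involved: the Laplacian $\Delta$ is $\cn$-linear on complex-valued functions, and the conformality operator $\kappa$ is symmetric and $\cn$-bilinear, since both are obtained from real operators ($\div\circ\grad$, respectively the pointwise pairing of gradients) by complex-(multi)linear extension. Neither step requires any analysis beyond these formal properties, so the whole argument is a bookkeeping computation.

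For the span, I would take arbitrary $\phi=\sum_{i}a_{i}\phi_{i}$ and $\psi=\sum_{j}b_{j}\psi_{j}$ with $a_{i},b_{j}\in\cn$ and $\phi_{i},\psi_{j}\in\E_{\lambda,\mu}$, and simply expand. By $\cn$-linearity of $\Delta$,
\[\Delta(\phi)=\sum_{i}a_{i}\Delta(\phi_{i})=\lambda\sum_{i}a_{i}\phi_{i}=\lambda\phi,\]
so $\phi\in\F_{\lambda}$; and by $\cn$-bilinearity of $\kappa$ together with the eigenfamily hypothesis,
\[\kappa(\phi,\psi)=\sum_{i,j}a_{i}b_{j}\,\kappa(\phi_{i},\psi_{j})=\mu\sum_{i,j}a_{i}b_{j}\,\phi_{i}\psi_{j}=\mu\phi\psi.\]
Specialising to $\psi=\phi$ gives $\kappa(\phi,\phi)=\mu\phi\phi$, hence $\phi\in\F_{\lambda,\mu}$, and the displayed identity for general $\phi,\psi$ is precisely the defining condition of an eigenfamily. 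Therefore $\sp_{\cn}\E_{\lambda,\mu}$ is an eigenfamily.

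For the conjugate family, the key observation is that $\grad$, $\div$ and the pointwise metric pairing are defined over $\rn$, so they commute with complex conjugation of functions; consequently $\Delta(\bar\phi)=\overline{\Delta(\phi)}$ and $\kappa(\bar\phi,\bar\psi)=\overline{\kappa(\phi,\psi)}$ for all complex-valued $\phi,\psi$. Applying this to $\phi,\psi\in\E_{\lambda,\mu}$ yields $\Delta(\bar\phi)=\bar\lambda\,\bar\phi$ and $\kappa(\bar\phi,\bar\psi)=\bar\mu\,\bar\phi\,\bar\psi$, so $\bar\E_{\lambda,\mu}$ is again an eigenfamily, now with eigenvalues $\bar\lambda$ and $\bar\mu$.

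The proof has no genuinely hard step; the only thing to watch is the bookkeeping of scalars — using $\kappa$ as the $\cn$-bilinear (not Hermitian) extension of $g$, and checking that conjugation merely replaces the pair $(\lambda,\mu)$ by $(\bar\lambda,\bar\mu)$ rather than destroying the eigenfamily structure. Both points are settled by recalling that the underlying operators are real and extended by linearity.
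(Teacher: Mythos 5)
Your proof is correct and follows essentially the same route as the paper, whose entire argument is the one-line observation that $\Delta$ is linear and $\kappa$ is bilinear; you simply carry out the expansion explicitly and, usefully, make precise the additional point the paper glosses over for $\bar{\E}_{\lambda,\mu}$, namely that $\Delta$ and $\kappa$ commute with conjugation because they are real operators extended by (bi)linearity, so the conjugate family has eigenvalues $\bar\lambda,\bar\mu$.
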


\begin{proof}[\pro]
This follows from the fact that $\Delta$ is linear and $\kappa$ is bilinear.
\end{proof}

Given an eigenfamily it is easy to produce more eigenfamilies by taking the symmetric product,
as in the next definition and lemma.

\begin{definition}
Let $(M,g)$ be a Riemannian manifold and $\E_{\lambda,\mu}$ be an
eigenfamily on $M$. Define $\odot^{n}\E_{\lambda,\mu}$ by
\[\odot^{n}\E_{\lambda,\mu}=\sp_{\cn}\{\phi_{1}\cdots\phi_{n}\,|\,\phi_{i}\in\E_{\lambda,\mu}\textrm{ for }i=1\ldots n\}.\]
\end{definition}

\begin{lemma}\cite{GudSack}
Let $(M,g)$ be a Riemannian manifold and $\E_{\lambda,\mu}$ be an eigenfamily on $M$.
If $\phi\in\odot^{k}\E_{\lambda,\mu}$ and $\psi\in\odot^{l}\E_{\lambda,\mu}$ then
\[\kappa(\phi,\psi)=k l\mu\phi\psi\quad\textrm{and}\quad\Delta(\phi)=k(\lambda+(k-1)\mu)\phi.\]
In particular, $\odot^{n}\E_{\lambda,\mu}$ is an eigenfamily.
\end{lemma}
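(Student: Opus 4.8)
The plan is to reduce everything to the case of pure products of elements of $\E_{\lambda,\mu}$ by exploiting the linearity of $\Delta$ and the bilinearity of $\kappa$, and then to run a short double induction using the Leibniz rules for $\grad$ and $\Delta$. First I would record the basic observation that $\kappa$ is a derivation in each of its two arguments: since $\grad(\phi\chi)=\phi\,\grad(\chi)+\chi\,\grad(\phi)$, bilinearity of $g$ gives $\kappa(\phi\chi,\psi)=\phi\,\kappa(\chi,\psi)+\chi\,\kappa(\phi,\psi)$, and symmetrically in the second slot. The product rule $\Delta(\phi\psi)=\psi\Delta(\phi)+2\kappa(\phi,\psi)+\phi\Delta(\psi)$ is already quoted in the text.

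For the $\kappa$ identity I would prove by induction on $k+l$ that $\kappa(\phi_{1}\cdots\phi_{k},\psi_{1}\cdots\psi_{l})=kl\mu\,\phi_{1}\cdots\phi_{k}\,\psi_{1}\cdots\psi_{l}$ whenever all $\phi_{i},\psi_{j}\in\E_{\lambda,\mu}$. The base case $k=l=1$ is precisely the defining property of an eigenfamily. For the inductive step, write $\phi_{1}\cdots\phi_{k}=(\phi_{1}\cdots\phi_{k-1})\,\phi_{k}$, apply the derivation property in the first slot, and invoke the inductive hypothesis on the pairs $(k-1,l)$ and $(1,l)$; collecting terms gives the scalar $(k-1)l\mu+l\mu=kl\mu$ times the product (if $k=1$ one instead splits off a factor $\psi_{l}$ from the second argument). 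The formula for arbitrary $\phi\in\odot^{k}\E_{\lambda,\mu}$ and $\psi\in\odot^{l}\E_{\lambda,\mu}$ then follows by writing each of $\phi,\psi$ as a $\cn$-linear combination of such pure products and using bilinearity of $\kappa$.

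For the Laplacian I would induct on $k$. The case $k=1$ is $\Delta(\phi)=\lambda\phi=1\cdot(\lambda+0\cdot\mu)\phi$. For the step, write $\phi=\phi'\phi_{k}$ with $\phi'\in\odot^{k-1}\E_{\lambda,\mu}$, apply $\Delta(\phi'\phi_{k})=\phi_{k}\Delta(\phi')+2\kappa(\phi',\phi_{k})+\phi'\Delta(\phi_{k})$, and substitute the inductive hypothesis for $\Delta(\phi')$, the $\kappa$ identity just proved for $\kappa(\phi',\phi_{k})$ (the case $(k-1,1)$), and $\Delta(\phi_{k})=\lambda\phi_{k}$. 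The emerging scalar is $(k-1)\bigl(\lambda+(k-2)\mu\bigr)+2(k-1)\mu+\lambda=k\lambda+k(k-1)\mu=k\bigl(\lambda+(k-1)\mu\bigr)$, as required, and linearity of $\Delta$ extends this to all of $\odot^{k}\E_{\lambda,\mu}$. Finally, specialising to $k=l=n$ shows $\odot^{n}\E_{\lambda,\mu}\subseteq\F_{n(\lambda+(n-1)\mu),\,n^{2}\mu}$ with $\kappa(\phi,\psi)=n^{2}\mu\,\phi\psi$ for all $\phi,\psi$ in it, i.e.\ it is an eigenfamily.

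I do not expect a genuine obstacle here: the only things to watch are keeping the indices of the double induction straight and remembering that $g$, $\Delta$ and $\kappa$ are all taken in their complex-bilinear extensions, so no conjugations intrude. The whole argument is therefore a routine Leibniz-rule computation dressed up as an induction.
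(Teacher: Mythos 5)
Your proof is correct and follows essentially the same route as the paper: a Leibniz-rule induction for $\kappa$ on pure products, followed by the product rule for $\Delta$ combined with the $\kappa$ identity, with the same arithmetic $(k-1)(\lambda+(k-2)\mu)+2(k-1)\mu+\lambda=k(\lambda+(k-1)\mu)$. The only (harmless) difference is that you make explicit the bilinear extension from pure products to all of $\odot^{k}\E_{\lambda,\mu}$, which the paper leaves implicit.
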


\begin{proof}[\pro]
The proof is by induction. The statement is obvious for $k=l=1$.
Assume $\kappa(\phi_{i},\psi_{j})=i j\mu\phi_{i}\psi_{j}$, for all $\phi_{i}\in \odot^{i}\E$ and
all $\psi_{j}\in\odot^{j}\E$ for $i\leq k$ and $j\leq l$.
We will show that the statement is true for all $i\leq k+1$ and all $j\leq l+1$.
Let $\theta,\Theta\in\E_{\lambda,\mu}$ then
\begin{align*}
\kappa(\phi_{k}\theta,\psi_{l})=&g(\theta\grad(\phi_{k})+\phi_{k}\grad(\theta),\grad(\psi_{l}))\\
=&\theta g(\grad(\phi_{k}),\grad(\psi_{l}))+\phi_{k}g(\grad(\theta),\grad(\psi_{l}))\\
=&(k l\mu+l\mu)\theta\phi_{k}\psi_{l}\\
=&\mu(k+1)l\theta\phi_{k}\psi_{1}.
\end{align*}
Similarly $\kappa(\phi_{k},\theta\psi_{l})=k(l+1)\mu\theta\phi_{k}\psi_{l}$,
$\kappa(\phi_{k}\Theta,\theta\psi_{l})=(k+1)(l+1)\mu\Theta\theta\phi_{k}\psi_{l}$.
For the Laplacian we have
\begin{align*}
\Delta(\phi_{k}\theta)=&\phi_{k}\Delta(\theta)
+2\kappa(\phi_{k},\theta)+\theta\Delta(\phi_{k})\\
=&\lambda\phi_{k}\theta+2\mu k \phi_{k}\theta+k(\lambda+(k-1)\mu)\phi_{k}\theta\\
=&(k+1)(\lambda+k\mu)\phi_{k}\theta.\qedhere
\end{align*}
\end{proof}

The following result from \cite{GudSack} details how eigenfamilies are used to produce harmonic morphisms.

\begin{theorem}\label{HowToHarMor}
Let $(M,g)$ be a Riemannian manifold, $\E_{\lambda,\mu}$ an eigenfamily on $M$ and
$\{\phi_{1},\ldots,\phi_{n}\}$ a basis for $\sp_{\cn}\E_{\lambda,\mu}$. Then
\[\frac{P(\phi_{1},\ldots,\phi_{n})}{Q(\phi_{1},\ldots,\phi_{n})}\]
is a harmonic morphism on $M\backslash\{p\in M\,|\,Q(\phi_{1},\ldots,\phi_{n})(p)=0\}$
for any linearly independent homogeneous polynomials $P,Q:\cn^{n}\to\cn$ of the
same degree $d>0$.
\end{theorem}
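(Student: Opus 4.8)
The plan is to use the characterization recalled in Section~2, namely that a complex-valued map $\Phi$ is a harmonic morphism precisely when it is both horizontally conformal and harmonic, i.e. when $\kappa(\Phi,\Phi)=0$ and $\Delta(\Phi)=0$. Thus the whole statement reduces to verifying these two identities for $\Phi=P(\phi_1,\ldots,\phi_n)/Q(\phi_1,\ldots,\phi_n)$ on the open set $M\setminus\{Q(\phi_1,\ldots,\phi_n)=0\}$ where the denominator is non-zero. First I would set $p=P(\phi_1,\ldots,\phi_n)$ and $q=Q(\phi_1,\ldots,\phi_n)$. Since each $\phi_i$ lies in the eigenfamily $\sp_\cn\E_{\lambda,\mu}$ and $P,Q$ are homogeneous of the same degree $d$, both $p$ and $q$ are $\cn$-linear combinations of degree-$d$ products of elements of $\sp_\cn\E_{\lambda,\mu}$, and hence, after expanding, both lie in $\odot^d\E_{\lambda,\mu}$. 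Applying the previous Lemma with $k=l=d$ (and passing from monomials to their spans using the bilinearity of $\kappa$ and linearity of $\Delta$) yields, with $\alpha:=d(\lambda+(d-1)\mu)$,
\[\Delta(p)=\alpha p,\quad \Delta(q)=\alpha q,\quad \kappa(p,p)=d^2\mu\, p^2,\quad \kappa(q,q)=d^2\mu\, q^2,\quad \kappa(p,q)=d^2\mu\, pq.\]
The essential point I would emphasise is that, because $P$ and $Q$ share the \emph{same} degree $d$, the constants $\alpha$ and $d^2\mu$ are common to $p$ and $q$; this coincidence is exactly what forces the cancellations below.

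Next I would compute the two operators by the chain rule for the complex-bilinear $\grad$, $\div$, $\kappa$ and $\Delta$. Writing $\grad(\Phi)=q^{-1}\grad(p)-pq^{-2}\grad(q)$ and using the bilinearity of $\kappa=g(\grad(\cdot),\grad(\cdot))$ gives
\[\kappa(\Phi,\Phi)=\frac{1}{q^4}\left(q^2\kappa(p,p)-2pq\,\kappa(p,q)+p^2\kappa(q,q)\right)=\frac{d^2\mu}{q^4}\left(p^2q^2-2p^2q^2+p^2q^2\right)=0.\]
For the Laplacian I would use the product rule $\Delta(fh)=f\Delta(h)+2\kappa(f,h)+h\Delta(f)$ together with the two auxiliary identities $\Delta(q^{-1})=-q^{-2}\Delta(q)+2q^{-3}\kappa(q,q)$ and $\kappa(p,q^{-1})=-q^{-2}\kappa(p,q)$, which follow from the product rule for $\div$ and the bilinearity of $\kappa$. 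Substituting the eigenfamily relations, every term becomes a multiple of $pq^{-1}=\Phi$, and the three coefficients $(2d^2\mu-\alpha)$, $-2d^2\mu$ and $\alpha$ sum to zero, so $\Delta(\Phi)=0$.

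Since $\Phi$ satisfies both $\kappa(\Phi,\Phi)=0$ and $\Delta(\Phi)=0$ on $M\setminus\{q=0\}$, it is a harmonic morphism there. The hypothesis that $P$ and $Q$ be linearly independent does not enter the verification of these two identities; its role is to ensure that $\Phi$ is non-constant, so that the resulting harmonic morphism is genuinely non-trivial rather than a constant map. I expect the only delicate point to be the careful justification of the chain-rule formulas for the complexified $\grad$, $\div$, $\kappa$ and $\Delta$ — in particular the correct handling of $q^{-1}$ and the passage from generating monomials to arbitrary elements of $\odot^d\E_{\lambda,\mu}$ — since the algebra itself collapses cleanly once the common-degree relations furnished by the Lemma are in place.
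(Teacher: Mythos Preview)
The paper does not supply its own proof of this theorem; it is quoted as a result from \cite{GudSack} and stated without argument. Your proposal is a correct and complete proof: the reduction to showing $\kappa(\Phi,\Phi)=0$ and $\Delta(\Phi)=0$, the observation that $p,q\in\odot^{d}\E_{\lambda,\mu}$ so that the preceding Lemma supplies common eigenvalues $\alpha=d(\lambda+(d-1)\mu)$ and $d^{2}\mu$, and the quotient-rule computations that force the cancellations are all valid. Your remark that linear independence of $P$ and $Q$ is not needed for the harmonic-morphism identities but only to avoid a constant map is also accurate.
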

In order to obtain non-constant harmonic morphisms we need eigenfamilies with
at least two dimensional spans.

Now we will state some further properties of eigenfamilies on compact manifolds, we will start
with the possible eigenvalues.

\begin{proposition}
Let $(M,g)$ be a compact Riemannian manifold. If $\F_{\lambda,\mu}\neq \{0\}$ then $\lambda,\mu\in\rn$
and $\lambda,\mu\leq 0$.
\end{proposition}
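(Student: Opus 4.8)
The plan is to exploit compactness through integration by parts against the Riemannian volume form $dv$. Suppose $\phi\in\F_{\lambda,\mu}$ is nonzero. First I would establish that $\lambda\in\rn$ and $\lambda\le 0$: on a compact manifold the Laplacian $\Delta=\div\circ\grad$ (with the geometer's sign convention used here, so $\Delta$ is the negative of the usual geometric Laplacian) is self-adjoint with respect to the $L^2$ inner product $\langle u,v\rangle_{L^2}=\int_M u\,\bar v\,dv$, hence its eigenvalues are real; and integrating $\Delta(\phi)=\lambda\phi$ against $\bar\phi$ and applying the divergence theorem gives
\[
\lambda\int_M|\phi|^2\,dv=\int_M\bar\phi\,\div(\grad\phi)\,dv=-\int_M g^{\cn}(\grad\phi,\grad\bar\phi)\,dv,
\]
where the right-hand integrand equals $|\grad\phi|^2\ge 0$ pointwise once we note that $\grad\bar\phi=\overline{\grad\phi}$ and that $g^{\cn}(X,\bar X)=|X|^2$ for the complexified metric. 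Since $\int_M|\phi|^2\,dv>0$, this forces $\lambda\le 0$.

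For $\mu$, the key observation is that the eigenfamily condition $\kappa(\phi,\phi)=\mu\phi^2$ combined with the product rule for $\Delta$ controls $\Delta(\phi^2)$. Concretely,
\[
\Delta(\phi^2)=2\phi\Delta(\phi)+2\kappa(\phi,\phi)=2\lambda\phi^2+2\mu\phi^2=2(\lambda+\mu)\phi^2,
\]
so $\phi^2\in\F_{2(\lambda+\mu)}$. Applying the first part of the argument to $\phi^2$ (which is again a nonzero eigenfunction of $\Delta$ on a compact manifold) yields $2(\lambda+\mu)\le 0$, i.e. $\lambda+\mu\le 0$; in particular $\mu\in\rn$ since both $\lambda$ and $\lambda+\mu$ are real. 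This already gives $\mu\le -\lambda$, but I want the sharper bound $\mu\le 0$. For that I would instead integrate the conformality relation directly: from $\kappa(\phi,\phi)=\mu\phi^2$ we get
\[
\mu\int_M\phi^2\,dv=\int_M g^{\cn}(\grad\phi,\grad\phi)\,dv,
\]
and it remains to sign-control the right-hand side. Pairing this with the computation $\int_M\phi\,\Delta(\phi)\,dv=-\int_M g^{\cn}(\grad\phi,\grad\phi)\,dv=\lambda\int_M\phi^2\,dv$ shows the two integrals $\int_M\phi^2\,dv$ and $\int_M g^{\cn}(\grad\phi,\grad\phi)\,dv$ are proportional with ratio $\lambda\le 0$, so in fact $\mu\int_M\phi^2\,dv=\lambda\int_M\phi^2\,dv$ whenever $\int_M\phi^2\,dv\ne 0$, giving $\mu=\lambda\le 0$.

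The main obstacle is the degenerate case $\int_M\phi^2\,dv=0$, which can genuinely occur for complex-valued $\phi$ (e.g. $\phi$ and $\bar\phi$ with the right normalization), so the last displayed identity does not immediately pin down $\mu$. To handle this I would apply the preceding analysis to the powers $\phi^n$: by the symmetric-product Lemma above, $\phi^n$ lies in the eigenfamily $\odot^n\E$ with $\Delta(\phi^n)=n(\lambda+(n-1)\mu)\phi^n$ and $\kappa(\phi^n,\phi^n)=n^2\mu(\phi^n)^2$, and the first part of this proof gives $n(\lambda+(n-1)\mu)\le 0$ for every $n\ge 1$. Dividing by $n^2$ and letting $n\to\infty$ forces $\mu\le 0$, which completes the argument regardless of whether $\int_M\phi^2\,dv$ vanishes. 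I expect the write-up to hinge on getting the sign conventions for $\Delta$ and the complex-bilinear extension of $g$ consistent throughout, and on justifying $g^{\cn}(\grad\phi,\overline{\grad\phi})=|\grad\phi|^2\ge 0$ pointwise.
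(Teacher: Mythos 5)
Your argument is correct and essentially the paper's: reality of $\mu$ comes from $\phi^2$ being a nonzero Laplace eigenfunction with eigenvalue $2(\lambda+\mu)$, and $\mu\le 0$ comes from observing that if $\mu>0$ a sufficiently high power $\phi^n$ would be a nonzero eigenfunction with positive eigenvalue $n(\lambda+(n-1)\mu)$, which is impossible on a compact manifold (the paper uses $\phi^{2k_0}$ for a suitable $k_0$ rather than your limit as $n\to\infty$, but the mechanism is identical). The only blemish is in the discarded middle detour: integration by parts actually gives $\mu\int_M\phi^2\,dv=-\lambda\int_M\phi^2\,dv$, not $\mu=\lambda$, so that branch would yield $\mu=-\lambda\ge 0$ and could not close the argument in any case; your final power argument correctly supersedes it.
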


\begin{proof}[\pro]
With the sign convention, for the Laplacian, that we have chosen it is well known that for compact
manifolds any eigenvalue $\lambda$ has to be real and $\lambda>0$ imply $\F_{\lambda}=\{0\}$.

Let $\phi\in \F_{\lambda,\mu}$ be a non-zero function. Then $\phi^{2}\in \F_{2(\lambda+\mu),4\mu}$ is non-zero.
If $\mu$ is not real then $\phi^{2}$ will have a complex eigenvalue of the Laplacian, which
is a contradiction.
If $\mu>0$ then there exist a $k_{0}\in\zn^{+}$ such that $\lambda +(2 k_{0}-1)\mu>0$. Then
\[\Delta(\phi^{2k_{0}})=2k_{0}(\lambda+(2k_{0}-1)\mu)\phi^{2k_{0}}\]
so $\phi^{2k_{0}}\in\F_{2k_{0}(\lambda +(2 k_{0}-1)\mu)}$ but
$\F_{2k_{0}(\lambda +(2 k_{0}-1)\mu)}=\{0\}$ since $2k_{0}(\lambda +(2 k_{0}-1)\mu)>0$.
This is a contradiction.
\end{proof}

The following result shows the importance of letting the functions be complex valued.

\begin{proposition}
Let $(M,g)$ be a compact Riemannian manifold.
If a function $\phi\in \F_{\lambda,\mu}$ is real-valued then it is constant.
\end{proposition}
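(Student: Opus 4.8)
The plan is to combine the sign restriction $\mu\leq 0$ supplied by the previous proposition with the elementary observation that, for a \emph{real}-valued $\phi$, the gradient $\grad(\phi)$ is a genuine (real) vector field on $M$, so that $\kappa(\phi,\phi)=g(\grad(\phi),\grad(\phi))=|\grad(\phi)|^{2}$ is pointwise non-negative. These two facts are compatible only when $\grad(\phi)$ vanishes identically.

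In detail, I would proceed as follows. First dispose of the trivial case: if $\phi$ is identically zero it is constant, so assume $\phi\not\equiv 0$; then $\F_{\lambda,\mu}\neq\{0\}$ and the preceding proposition gives $\mu\leq 0$. Since $\phi$ is real-valued, at every point of $M$ we have, on the one hand, $\kappa(\phi,\phi)=|\grad(\phi)|^{2}\geq 0$ and, on the other hand, $\kappa(\phi,\phi)=\mu\phi^{2}\leq 0$, because $\mu\leq 0$ and $\phi^{2}\geq 0$. Hence $|\grad(\phi)|^{2}\equiv 0$, so $\grad(\phi)\equiv 0$, and $\phi$ is constant (on each connected component of $M$; we take $M$ connected, as is standard).

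There is no genuine obstacle here: the content of the argument is a one-line pointwise comparison once the previous proposition is available. The only points that deserve a word are (i) that real-valuedness of $\phi$ is precisely what makes $\kappa(\phi,\phi)$ non-negative -- for a complex-valued $\phi$ it is a sum of squares of the \emph{complex} partial derivatives and carries no sign information, which is exactly why the complex setting is needed -- and (ii) that connectedness of $M$ is what upgrades $\grad(\phi)\equiv 0$ to $\phi$ constant. If one prefers to avoid invoking the previous proposition, one can argue directly: either note that $\mu\geq 0$ by evaluating $|\grad(\phi)|^{2}=\mu\phi^{2}$ at a point where $\phi\neq 0$ and then rerun the squaring argument of that proof, or integrate $\Delta(\phi^{2})=2(\lambda+\mu)\phi^{2}$ over the closed manifold $M$ together with the identity $\int_{M}g(\grad(\psi),\grad(\psi))=-\int_{M}\psi\,\Delta(\psi)$ (applied to $\psi=\phi$ and $\psi=\phi^{2}$) to force $\grad(\phi)\equiv 0$.
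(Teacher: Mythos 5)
Your proof is correct and takes essentially the same route as the paper's: both combine $\mu\le 0$ from the preceding proposition with the pointwise non-negativity of $\kappa(\phi,\phi)=|\grad(\phi)|^{2}$ for real-valued $\phi$ to force $\grad(\phi)\equiv 0$. The only cosmetic difference is that the paper first concludes $\mu=0$ whereas you compare signs pointwise, and you are slightly more careful about the trivial case $\phi\equiv 0$.
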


\begin{proof}[\pro]
Since $\phi$ is real-valued $\kappa(\phi,\phi)=|\nabla(\phi)|^{2}=\mu|\phi|^{2}$, so $\mu\geq 0$.
But we know that $\mu\leq 0$.
So $\mu=0$ which imply $|\nabla(\phi)|^{2}=0$ and $\phi$ is constant.
\end{proof}

Any subset of an eigenfamily is again an eigenfamily, so to classify them we want to find
maximal subspaces of $\F_{\lambda}$ that are eigenfamilies.

\begin{definition}
Suppose $\E_{\lambda,\mu}$ is an eigenfamily and that for all
$\theta\in \F_{\lambda,\mu}\backslash\E_{\lambda,\mu}$ there exists a
$\phi\in\E_{\lambda,\mu}$ such that $\kappa(\theta,\phi)\neq \mu\theta\phi$. Then
we call $\E_{\lambda,\mu}$ a \textbf{maximal} eigenfamily.
\end{definition}

Since the span of an eigenfamily is also an eigenfamily, maximal eigenfamilies are vector spaces.
In fact they are the largest possible linear subsets of $\F_{\mu,\lambda}$.

\begin{proposition}
Let $(M,g)$ be a compact Riemannian manifold and $\lambda<0$ and $\E_{\lambda,\mu}$ an
eigenfamily. Then $\E_{\lambda,\mu}\cap\bar{\E}_{\lambda,\mu}=\{0\}$.
\end{proposition}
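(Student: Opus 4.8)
The plan is to argue by contradiction. Suppose $\phi \in \E_{\lambda,\mu} \cap \bar{\E}_{\lambda,\mu}$ is non-zero; then both $\phi$ and $\bar\phi$ lie in the eigenfamily $\E_{\lambda,\mu}$ (recall $\bar{\E}_{\lambda,\mu}$ is the eigenfamily $\{\bar\psi \mid \psi \in \E_{\lambda,\mu}\}$, so $\phi \in \bar{\E}_{\lambda,\mu}$ means $\bar\phi \in \E_{\lambda,\mu}$). Since $\E_{\lambda,\mu}$ is an eigenfamily, the defining property applies to the pair $\phi, \bar\phi$, giving $\kappa(\phi,\bar\phi) = \mu \phi \bar\phi = \mu |\phi|^2$.

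Now observe that $\kappa(\phi, \bar\phi) = g(\grad(\phi), \grad(\bar\phi)) = g(\grad(\phi), \overline{\grad(\phi)}) = |\grad(\phi)|^2 \geq 0$, where the last expression uses the ordinary (real) norm squared of the complex gradient, which is pointwise non-negative. Combining with the previous identity, $\mu |\phi|^2 = |\grad(\phi)|^2 \geq 0$ pointwise on $M$. By the Proposition classifying eigenvalues on compact manifolds we have $\mu \leq 0$, so $\mu |\phi|^2 \leq 0$; hence $\mu|\phi|^2 \equiv 0$, which forces $|\grad(\phi)|^2 \equiv 0$, i.e.\ $\phi$ is constant.

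It remains to rule out $\phi$ being a non-zero constant. A non-zero constant function $c$ satisfies $\Delta(c) = 0$, so it lies in $\F_{\lambda}$ only if $\lambda = 0$ (using $\Delta(c) = \lambda c$ and $c \neq 0$); but we are given $\lambda < 0$, a contradiction. Therefore $\phi = 0$, establishing $\E_{\lambda,\mu} \cap \bar{\E}_{\lambda,\mu} = \{0\}$.

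The argument is short and the only mildly delicate point is being careful about the two meanings of the ``bar'': membership $\phi \in \bar{\E}_{\lambda,\mu}$ is used to deduce $\bar\phi \in \E_{\lambda,\mu}$, so that $\phi$ and $\bar\phi$ form an admissible pair inside the single eigenfamily $\E_{\lambda,\mu}$ — that is what licenses the key identity $\kappa(\phi,\bar\phi) = \mu|\phi|^2$. The hypothesis $\lambda < 0$ (as opposed to merely $\lambda \leq 0$) is exactly what is needed to eliminate the constants at the end.
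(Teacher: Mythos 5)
Your proof is correct, and it takes a somewhat different route from the paper's. The paper forms the product $|\phi|^{2}=\phi\bar{\phi}$, places it in $\F_{2(\lambda+\mu),4\mu}$ via the symmetric-product lemma, invokes the earlier proposition that a real-valued member of an $\F_{\lambda',\mu'}$ on a compact manifold must be constant, and then uses $\lambda<0$ (together with $\mu\le 0$) to force $2(\lambda+\mu)|\phi|^{2}=\Delta(|\phi|^{2})=0$, hence $\phi=0$. You instead work directly with the conformality operator on the pair $(\phi,\bar{\phi})$: the eigenfamily identity gives $\kappa(\phi,\bar{\phi})=\mu|\phi|^{2}$, while the bilinear extension of $g$ gives $\kappa(\phi,\bar{\phi})=g(\grad(\phi),\overline{\grad(\phi)})\ge 0$ pointwise, and the sign constraint $\mu\le 0$ (from the eigenvalue proposition, applicable because you may assume $\phi\ne 0$) forces $\grad(\phi)\equiv 0$, whence $\phi$ is constant and $\lambda<0$ kills it. In effect you have inlined the paper's ``real-valued implies constant'' argument, applied to the pair $(\phi,\bar{\phi})$ rather than to the single real function $|\phi|^{2}$; this is marginally more self-contained, since it avoids the symmetric-product lemma entirely, at the cost of recomputing the pointwise positivity of $g(\grad(\phi),\grad(\bar{\phi}))$. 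Both arguments tacitly use connectedness of $M$ to pass from vanishing gradient to constancy, and both correctly identify $\lambda<0$ as the hypothesis that eliminates nonzero constants at the end.
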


\begin{proof}[\pro]
Let $\phi\in\E_{\lambda,\mu}\cap\bar{\E}_{\lambda,\mu}$. Then $\phi,\bar{\phi}\in\E_{\lambda,\mu}$
so  $|\phi|^{2}=\phi\bar{\phi}\in \F_{2(\lambda+\mu),4\mu}$ but since it is
real-valued it must be constant. Then $\lambda<0$ imply $\phi=0$.
\end{proof}

We see that the dimension of a maximal eigenfamily $\E_{\lambda,\mu}$ must be less than or equal to half
the dimension of $\F_{\lambda}$.

\section{Constructing eigenfamilies on compact Lie groups}

In this section we describe the method to obtain eigenfamilies using unitary representations
of Lie groups given in \cite{Gud-Sven-Ville}. Let $G$ be a compact Lie group with a
bi-invariant metric. By the Theorem of Highest Weight, see Theorem 5.110 in \cite{Knapp},
the irreducible representations of $G$ are in one-to-one with the dominant algebraically
integral roots, we denote this set of roots by $\Gamma$. The correspondence is such that each $\gamma\in\Gamma$
is the highest weight for the representation $\rho_{\gamma}:G\to\Aut(V_{\gamma})$.
By the Peter-Weyl theorem, see Theorem 4.20 in \cite{Knapp}, we have
\[L^{2}(G)=\bigoplus_{\gamma\in\Gamma}M(V_{\gamma}),\]
where $M(V_{\gamma})$ is the set of functions spanned by the elements of the
matrix $\rho_{\gamma}(g):V_{\gamma}\to V_{\gamma}$. From now on we suppress $\gamma$ and $\rho$
denotes any irreducible representation of $G$.

Given $a,b\in V^{\cn}$ define $\phi_{ab}:G\to\cn$ by $\phi_{ab}(g)=\left<\rho(g)a,b\right>$.
Then $\phi_{ab}\in M(V)$, in fact these function span all of $M(V)$.
Let $\sigma:\g\to\End(V)$ denote the Lie algebra representation related to $\rho$.
Then we have $X_{g}(\phi_{ab})=\SPE{\rho(g)\sigma(X)a}{b}$ for any left-invariant
vector field $X\in\g$.

Let $\{X_{i}\}_{i=1}^{n}$ be an orthonormal basis for the Lie algebra $\g$ of $G$. Then
$\{-X_{i}\}_{i=1}^{n}$ is its dual basis with respect to the Killing form.
The Laplacian acts on $\phi_{ab}$ as
\begin{align*}
\Delta(\phi_{ab})(g)&=\sum_{i}X_{i}(X_{i}(\phi_{ab}))(g)\\
&=\sum_{i}\SPE{\rho(g)\sigma(X_{i})^{2}a}{b}\\
&=\SPE{\rho(g)(\sum_{i}\sigma(X_{i})^{2})a}{b}\\
&=-\SPE{\rho(g)(\sum_{i}\sigma(X_{i})\sigma(-X_{i}))a}{b}.
\end{align*}
Now $\sum_{i}\sigma(X_{i})\sigma(-X_{i})$ is the Casimir operator of the representation $\sigma$.
Since the representation is irreducible it is a scalar times the identity. The scalar is in fact,
see Theorem 5.28 in \cite{Knapp},
$\SPE{\gamma}{\gamma+2\delta}$, where $\gamma$ is the highest weight and $\delta$ is
half the sum of the positive roots. Thus
\[\Delta(\phi_{ab})=\lambda\phi_{ab},\]
where $\lambda=-\SPE{\gamma}{\gamma+2\delta}$.
In the notation of the previous section $\F_{\lambda}=M(V_{\gamma})$.

Similarly $\kappa$ is given by
\begin{align*}
\kappa(\phi_{ab},\phi_{cd})(g)&=\sum_{i=1}^{n}(X_{i})_{g}(\phi_{ab})(X_{i})_{g}(\phi_{cd})\\
&=\sum_{i=1}^{n}\SPE{\rho(g)\sigma(X_{i})a}{b}\SPE{\rho(g)\sigma(X_{i})c}{d}.
\end{align*}
Define $Q$ to be the value of $\kappa(\phi_{ab},\phi_{cd})$ in the identity element $e\in G$, i.e.
\[Q(a,b,c,d)=\kappa(\phi_{ab},\phi_{cd})(e)=\sum_{i}\left<\sigma(X_{i})a,b\right>\left<\sigma(X_{i})c,d\right>.\]

If we assume that the representation $\rho$ is of real type then
\begin{align*}
\kappa(\phi_{ab},\phi_{cd})(g)=&\sum_{i}\left<\rho(g)\sigma(X_{i})a,b\right>\left<\rho(g)\sigma(X_{i})c,d\right>\\
=&\sum_{i}\left<\rho(g)\sigma(X_{i})\rho(g)^{-1}\rho(g)a,b\right>\left<\rho(g)\sigma(X_{i})\rho(g)^{-1}\rho(g)c,d\right>\\
=&\sum_{i}\left<\sigma(Y_{i})\rho(g)a,b\right>\left<\sigma(Y_{i})\rho(g)c,d\right>\\
=&Q(\rho(g)a,b,\rho(g)c,d),
\end{align*}
since $\{Y_{i}=gX_{i}g^{-1}\}$ is an other orthonormal basis for $\g$.

$Q$ is skew-symmetric in $a,b$ and in $c,d$ and
symmetric in the pairs $(a,b),(c,d)$, thus we can consider $Q$ as a symmetric map
$Q:\bigwedge^{2} V_{\lambda}\times \bigwedge^{2}V_{\lambda}\to \rn$.
By extending the scalar product to be complex bilinear we can see $Q$ as a symmetric map
$Q:\bigwedge^{2}V_{\lambda}^{\cn}\times\bigwedge^{2}V_{\lambda}^{\cn}\to\cn$.

\begin{theorem}\cite{Gud-Sven-Ville}\label{HowToEigenFam}
Let $G$ be a compact Lie group and $\rho:G\to \Aut(V)$ be a unitary representation of real type.
If there exists a $\mu\in\rn$ such that $Q(a\wedge p,b\wedge p)=\mu\SPW{a\wedge p}{b\wedge p}$
for all $a,b,p\in V^{\cn}$, then $\E(q)=\{\phi_{aq}(g)=\SPE{\rho(g)a}{q}\,|\,a\in V^{\cn}\}$ is an eigenfamily
for any $q\in V^{\cn}$ such that $\SPE{q}{q}=0$.
\end{theorem}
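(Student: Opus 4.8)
The plan is to check directly that $\E(q)$ meets the two requirements in the definition of an eigenfamily: a single Laplace eigenvalue $\lambda$ shared by all its members, and the conformality identity $\kappa(\phi,\psi)=\mu'\phi\psi$ for all pairs $\phi,\psi\in\E(q)$ with one fixed $\mu'$. Observe first that $\E(q)=\{\phi_{aq}\mid a\in V^{\cn}\}$ is already a complex-linear subspace of $M(V)$, since $a\mapsto\phi_{aq}$ is linear; hence it suffices to run the conformality check over all pairs $(a,b)$. The Laplace part is immediate from the computation preceding the theorem: for the (irreducible) representation $\rho$ the Casimir operator acts on $V^{\cn}$ as the scalar $\lambda=-\SPE{\gamma}{\gamma+2\delta}$, so $\Delta(\phi_{aq})=\lambda\phi_{aq}$ for every $a\in V^{\cn}$, with $\lambda$ independent of $a$ and $q$; thus $\E(q)\subseteq\F_{\lambda}$.

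For the conformality part I would start from the real-type formula established just above the theorem, $\kappa(\phi_{ab},\phi_{cd})(g)=Q(\rho(g)a,b,\rho(g)c,d)$, specialised to $b=d=q$:
\[
\kappa(\phi_{aq},\phi_{bq})(g)=Q\bigl(\rho(g)a,q,\rho(g)b,q\bigr)=Q\bigl((\rho(g)a)\wedge q,\,(\rho(g)b)\wedge q\bigr),
\]
where in the last step $Q$ is read as the symmetric bilinear form on $\bigwedge^{2}V^{\cn}$ as in the discussion preceding the theorem. The hypothesis now applies verbatim with $p=q$, giving
\[
\kappa(\phi_{aq},\phi_{bq})(g)=\mu\,\SPW{(\rho(g)a)\wedge q}{(\rho(g)b)\wedge q}.
\]

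Expanding the induced inner product on $\bigwedge^{2}V^{\cn}$ as the Gram determinant of scalar products and using that $\SPE{\cdot}{\cdot}$ is symmetric,
\[
\SPW{(\rho(g)a)\wedge q}{(\rho(g)b)\wedge q}=\SPE{\rho(g)a}{\rho(g)b}\,\SPE{q}{q}-\SPE{\rho(g)a}{q}\,\SPE{\rho(g)b}{q}.
\]
The condition $\SPE{q}{q}=0$ kills the first term, and the second term is exactly $-\phi_{aq}(g)\,\phi_{bq}(g)$. Hence $\kappa(\phi_{aq},\phi_{bq})=-\mu\,\phi_{aq}\phi_{bq}$ on all of $G$, for all $a,b\in V^{\cn}$, so with $\mu'=-\mu$ every member of $\E(q)$ lies in $\F_{\lambda,\mu'}$ and every cross term has the required shape; therefore $\E(q)$ is an eigenfamily.

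I do not anticipate a genuine obstacle; the steps are essentially forced once the real-type formula is in hand. The points demanding care are bookkeeping ones: matching the sign and normalisation convention for $\SPW{\cdot}{\cdot}$ on $\bigwedge^{2}V^{\cn}$ with the one implicit in the hypothesis on $Q$, so that the constant multiplying $\phi_{aq}\phi_{bq}$ comes out unambiguously (here $-\mu$ with the Gram-determinant convention), and remembering that "real type" enters only through the already-proved identity $\kappa(\phi_{ab},\phi_{cd})(g)=Q(\rho(g)a,b,\rho(g)c,d)$ together with the $\Ad$-invariance of the Killing form, both of which are available from the preceding pages.
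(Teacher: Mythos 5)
Your proposal is correct and follows essentially the same route as the paper's proof: the Laplace eigenvalue comes from the Casimir computation via Peter--Weyl, and the conformality identity comes from the real-type formula $\kappa(\phi_{aq},\phi_{bq})(g)=Q(\rho(g)a\wedge q,\rho(g)b\wedge q)$, the hypothesis on $Q$, and the Gram expansion of $\SPW{\cdot}{\cdot}$ with $\SPE{q}{q}=0$, yielding the eigenvalue $-\mu$ exactly as in the paper.
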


\begin{proof}[\pro]
We already know that $\Delta(\phi_{ab})=\lambda\phi_{ab}$ by the Peter-Weyl theorem.
For any $g\in G$
\begin{align*}
\kappa(\phi_{aq},\phi_{bq})(g)=&Q(\rho(g)a\wedge q,\rho(g)b\wedge q)\\
=&\mu\SPW{\rho(g)a\wedge q}{\rho(g)b\wedge q}\\
=&\mu(\SPE{\rho(g)a}{\rho(g)b}\SPE{q}{q}-\SPE{\rho(g)a}{q}\SPE{\rho(g)b}{q})\\
=&-\mu\phi_{aq}(g)\phi_{bq}(g).\qedhere
\end{align*}
\end{proof}

For a Euclidean vector space $V$ define the isomorphism $R:\bigwedge^{2}V \to \so(V)$ by
\[R(a\wedge b)(v)=\SPE{a}{v}b-\SPE{b}{v}a.\]
Define the exterior square representation
$\tilde{\rho}:G\to\Aut(\bigwedge^{2}V)$ by
\[\tilde{\rho}(g)(a\wedge b)=(\rho(g)a)\wedge (\rho(g)b)\]
and the representation $\hat{\rho}:G\to\Aut(\so(V))$ by
\[\hat{\rho}(g)(A)=\rho(g)A\rho(g)^{-1}.\]
These representations are equivalent since
\begin{align*}
(\hat{\rho}(g)R(a\wedge b))(v)=&\rho(g)R(a\wedge b)(\rho(g)^{-1}v)\\
=&\rho(g)(\SPE{a}{\rho(g)^{-1}v}b-\SPE{b}{\rho(g)^{-1}v}a)\\
=&\SPE{\rho(g)a}{v}\rho(g)b-\SPE{\rho(g)b}{v}\rho(g)a\\
=&R(\tilde{\rho}(g)(a\wedge b))(v).
\end{align*}

The scalar product on $\bigwedge^{2}V$ is given by
\[\SPW{a\wedge b}{c\wedge d}=\SPE{a}{c}\SPE{b}{d}-\SPE{a}{d}\SPE{b}{c}\]
and the scalar product on $\so(V)$ by
\[\SPS{A}{B}=\sum_{i}\SPE{A(e_{i})}{B(e_{i})}=\tr(A^{t}B).\]

\begin{lemma}
Let $V$ be a Euclidean space, $A\in\so(V)$ and $a,b,c,d\in V$. Then
\begin{itemize}
\item[(i)] $\SPS{A}{R(a\wedge b)}=2\SPE{A(a)}{b}$, and
\item[(ii)] $\SPS{R(a\wedge b)}{R(c\wedge d)}=2\SPW{a\wedge b}{c\wedge d}$.
\end{itemize}
\end{lemma}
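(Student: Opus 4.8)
The plan is to prove both identities by direct computation using an orthonormal basis $\{e_i\}$ of $V$, unwinding the definitions of $R$, $\SPS{\cdot}{\cdot}$ and $\SPW{\cdot}{\cdot}$ given just above.

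For part (i), I would start from the definition $\SPS{A}{R(a\wedge b)}=\sum_i\SPE{A(e_i)}{R(a\wedge b)(e_i)}$ and substitute $R(a\wedge b)(e_i)=\SPE{a}{e_i}b-\SPE{b}{e_i}a$. Expanding gives two sums, $\sum_i\SPE{a}{e_i}\SPE{A(e_i)}{b}-\sum_i\SPE{b}{e_i}\SPE{A(e_i)}{a}$. Each sum collapses by the expansion $x=\sum_i\SPE{x}{e_i}e_i$, so the first equals $\SPE{A(a)}{b}$ and the second equals $\SPE{A(b)}{a}$. Since $A\in\so(V)$ is skew-symmetric, $\SPE{A(b)}{a}=-\SPE{A(a)}{b}$, and the two terms add to give $2\SPE{A(a)}{b}$.

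For part (ii), the cleanest route is to reuse part (i) with $A=R(c\wedge d)$, which is indeed in $\so(V)$: then $\SPS{R(a\wedge b)}{R(c\wedge d)}=\SPS{R(c\wedge d)}{R(a\wedge b)}=2\SPE{R(c\wedge d)(a)}{b}$ by symmetry of the inner product and part (i). Now $R(c\wedge d)(a)=\SPE{c}{a}d-\SPE{d}{a}c$, so $\SPE{R(c\wedge d)(a)}{b}=\SPE{c}{a}\SPE{d}{b}-\SPE{d}{a}\SPE{c}{b}$, which is exactly $\SPW{a\wedge b}{c\wedge d}$ by the stated formula for the inner product on $\bigwedge^2 V$. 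Hence the right-hand side is $2\SPW{a\wedge b}{c\wedge d}$.

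There is no real obstacle here; the only point requiring a little care is verifying that $R(c\wedge d)$ is skew-symmetric so that part (i) applies in the proof of part (ii) — but this is immediate from $\SPE{R(c\wedge d)(u)}{v}=\SPE{c}{u}\SPE{d}{v}-\SPE{d}{u}\SPE{c}{v}$, which is visibly antisymmetric in $u,v$. Alternatively one could bypass (i) and expand $\SPS{R(a\wedge b)}{R(c\wedge d)}=\sum_i\SPE{R(a\wedge b)(e_i)}{R(c\wedge d)(e_i)}$ directly into four terms and collapse each sum as in (i); I expect the first route to be shorter to write.
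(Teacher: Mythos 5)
Your proposal is correct and follows essentially the same route as the paper: part (i) by expanding in an orthonormal basis and using skew-symmetry of $A$, and part (ii) by specializing part (i) to $A=R(c\wedge d)$ (the paper plugs the other factor into (i), which is equivalent). Your extra remark that $R(c\wedge d)$ is indeed skew-symmetric is a harmless addition the paper leaves implicit in the definition of $R$.
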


\begin{proof}[\pro]
By the definition of the scalar product
\begin{align*}
\SPS{A}{R(a\wedge b)}&=\sum_{i}\SPE{A(e_{i})}{\SPE{a}{e_{i}}b-\SPE{b}{e_{i}}a}\\
&=\sum_{i}(\SPE{A(\SPE{a}{e_{i}}e_{i})}{b}-\SPE{A(\SPE{b}{e_{i}}e_{i})}{a})\\
&=2\SPE{A(a)}{b}.
\end{align*}
Set $A=R(c\wedge d)$, then
\begin{align*}
\SPS{R(a\wedge b)}{R(c\wedge d)}&=2\SPE{R(a\wedge b)(c)}{d}\\
=&2\SPE{\SPE{a}{c}b-\SPE{b}{c}a}{d}\\
=&2(\SPE{a}{c}\SPE{b}{d}-\SPE{b}{c}\SPE{a}{d})\\
=&2\SPW{a\wedge b}{c\wedge d}.\qedhere
\end{align*}
\end{proof}

The next result tells us that $Q$ acts as a projection onto the Lie algebra of $G$.

\begin{theorem}\cite{Gud-Sven-Ville}
Let $G$ be a Lie group and $\rho:G\to\Aut(V)$ be a representation of real type.
Then for all $a,b,c,d\in V$
\[Q(a\wedge b,c\wedge d)=\SPW{P_{\g}(a\wedge b)}{c\wedge d}\]
where $P_{\g}$ is the projection onto the Lie algebra $\g$ of $G$.
\end{theorem}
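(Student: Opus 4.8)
The plan is to identify $\g$ with a linear subspace of $\bigwedge^{2}V$ via the infinitesimal representation and then recognise $Q$ as the associated orthogonal projection with the help of the previous Lemma. First I would note that, because $\rho$ is of real type, every $\rho(g)$ lies in $O(V)$; differentiating a one-parameter subgroup at the identity then shows $\sigma(X)\in\so(V)$ for all $X\in\g$. Hence $\iota:=R^{-1}\circ\sigma:\g\to\bigwedge^{2}V$ is a linear map, and I would use it to view $\g$ as a subspace of $\bigwedge^{2}V$ (if $\rho$ fails to be faithful one takes the image $\iota(\g)$, which is what $P_{\g}$ refers to). Writing $Z_{i}=\iota(X_{i})$ for the chosen orthonormal basis $\{X_{i}\}$ of $\g$, part (ii) of the Lemma gives $\SPW{Z_{i}}{Z_{j}}=\tfrac{1}{2}\SPS{\sigma(X_{i})}{\sigma(X_{j})}=-\tfrac{1}{2}\tr(\sigma(X_{i})\sigma(X_{j}))$; under the normalization of the bi-invariant metric in force this equals $\delta_{ij}$, so $\{Z_{i}\}$ is an orthonormal basis of $\g\subseteq\bigwedge^{2}V$ and $P_{\g}(\omega)=\sum_{i}\SPW{Z_{i}}{\omega}\,Z_{i}$.

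The second step is a short computation with the Lemma. For each $i$, part (i) gives $\SPE{\sigma(X_{i})a}{b}=\tfrac{1}{2}\SPS{\sigma(X_{i})}{R(a\wedge b)}=\tfrac{1}{2}\SPS{R(Z_{i})}{R(a\wedge b)}$, and part (ii), extended bilinearly, rewrites this as $\SPE{\sigma(X_{i})a}{b}=\SPW{Z_{i}}{a\wedge b}$. Substituting into $Q(a\wedge b,c\wedge d)=\sum_{i}\SPE{\sigma(X_{i})a}{b}\,\SPE{\sigma(X_{i})c}{d}$ gives $Q(a\wedge b,c\wedge d)=\sum_{i}\SPW{Z_{i}}{a\wedge b}\,\SPW{Z_{i}}{c\wedge d}$, which by the projection formula above is precisely $\SPW{P_{\g}(a\wedge b)}{c\wedge d}$. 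I would then extend the identity from $V$ to $V^{\cn}$ by complex bilinearity in $a\wedge b$ and in $c\wedge d$, which finishes the argument.

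The step I expect to require the most care is the normalization used above: for $Q$ to equal $\SPW{P_{\g}(\cdot)}{\cdot}$ with $P_{\g}$ an honest orthogonal projection, the inner product on $\g$ defining the $X_{i}$ must be exactly the one pulled back by $\iota$ from $\bigwedge^{2}V$, i.e. $\SPE{X}{Y}=-\tfrac{1}{2}\tr(\sigma(X)\sigma(Y))$; with a different scaling one only obtains a fixed positive multiple of $P_{\g}$ (harmless for the later application, since it merely rescales the constant $\mu$). I would therefore make this normalization explicit, and remark that in the case of interest — the faithful $7$-dimensional representation of $G_{2}$ — the map $\iota$ is injective, so $\g$ is literally a $14$-dimensional subspace of $\bigwedge^{2}\cn^{7}$ and no further bookkeeping is needed. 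Everything else is the linear algebra already contained in the Lemma.
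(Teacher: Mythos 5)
Your proof is correct and follows essentially the same route as the paper's: both reduce $Q$ to the reproducing formula for the orthogonal projection onto the image of $\g$, via parts (i) and (ii) of the preceding Lemma, with you working in $\bigwedge^{2}V$ where the paper works in $\so(V)$. Your closing remark about normalization is well taken — with the paper's convention $\SPS{\sigma(X_{i})}{\sigma(X_{j})}=\delta_{ij}$ the elements $R^{-1}(\sigma(X_{i}))$ have $\SPW{\cdot}{\cdot}$-norm $1/\sqrt{2}$ rather than $1$, and indeed the paper's own computation ends with $\tfrac{1}{2}\SPW{P_{\g}(a\wedge b)}{c\wedge d}$ rather than the unscaled expression in the statement, a harmless constant that, as you note, only rescales $\mu$ downstream.
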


\begin{proof}[\pro]
Let $a,b,c,d\in V$ and $\{X_{i}\}_{i}^{n}$ be an orthonormal basis for $\g$. The
scalar product on $\g$ is such that the Lie algebra representation $\sigma:\g\to\so(V)$
is an isometry onto its image, meaning $\SPS{\sigma(X_{i})}{\sigma(X_{j})}=\delta_{ij}$.
\begin{align*}
Q(a\wedge b,c\wedge d)&=\sum_{i=1}^{n}\SPE{\sigma(X_{i}) a}{b}\SPE{\sigma(X_{i}) c}{d}\\
&=\frac{1}{4}\sum_{i=1}^{n}\SPS{\sigma(X_{i})}{R(a\wedge b)}\SPS{\sigma(X_{i})}{R(c\wedge d)}\\
&=\frac{1}{4}\SPS{\sum_{i=1}^{n}\SPS{\sigma(X_{i})}{R(a\wedge b)}\sigma(X_{i})}{R(c\wedge d)}\\
&=\frac{1}{4}\SPS{P_{\g}(R(a\wedge b))}{R(c\wedge d)}\\
&=\frac{1}{2}\SPW{P_{\g}(a\wedge b)}{c\wedge d}.\qedhere
\end{align*}
\end{proof}

Everything will still hold if we take $a,b,c,d\in V^{\cn}$ and use the complex bilinear extension
of the scalar product.

\section{Eigenfamilies on $G_{2}$}

In this section we show that for $G=G_{2}$ the conditions of Theorem \ref{HowToEigenFam} are satisfied, hence
there exist eigenfamilies on $G_{2}$. Let $\rho:G_{2}\to\Aut(V)$ denote the standard representation of $G_{2}$.
The exterior square representation
$\tilde{\rho}:G_{2}\to\Aut(\bigwedge^{2}V)$ has two irreducible subspaces
$\bigwedge^{2}V=\g_{2}\oplus W$, see page 353 in \cite{Fulton-Harris}. As $\dim(\g_{2})=14$ we have $\dim(W)=7$.

The following result is standard, see page 408 in \cite{Calabi}.
\begin{lemma}\label{CrossProd} Let $V$ be a seven dimensional Euclidean space and
$\times:V\times V\to V$ be the seven dimensional cross-product. Then
\[\SPE{v\times u}{w}=-\SPE{u}{v\times w}\]
and
\[u\times(v\times w)+v\times(u\times w)=\SPE{u}{w}v+\SPE{v}{w}u-2\SPE{u}{v}w\]
and for all $u,v,w\in V$.
\end{lemma}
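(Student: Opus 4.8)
The plan is to derive both identities from the standard defining axioms of a cross product on a Euclidean space $V$: bilinearity, the orthogonality relations $\SPE{u\times v}{u}=\SPE{u\times v}{v}=0$, and the norm identity $\SPE{u\times v}{u\times v}=\SPE{u}{u}\SPE{v}{v}-\SPE{u}{v}^{2}$ for all $u,v\in V$ (these are the properties referenced in \cite{Calabi}). First I would record two immediate consequences: setting $u=v$ in the norm identity gives $u\times u=0$, and bilinearity then yields the anti-symmetry $u\times v=-v\times u$.

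For the first identity I would introduce the trilinear form $\phi(u,v,w)=\SPE{u\times v}{w}$. Anti-symmetry of the cross product gives $\phi(u,v,w)=-\phi(v,u,w)$, while the orthogonality relation $\SPE{u\times v}{v}=0$ says $\phi(u,v,v)=0$; polarizing the last two slots (replace $v$ by $v+w$) yields $\phi(u,v,w)=-\phi(u,w,v)$. The claimed identity $\SPE{v\times u}{w}=-\SPE{u}{v\times w}$ is exactly the statement $\phi(v,u,w)=-\phi(v,w,u)$, i.e.\ this last anti-symmetry.

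For the second identity I would first polarize the norm identity in its second slot to obtain $\SPE{u\times v}{u\times w}=\SPE{u}{u}\SPE{v}{w}-\SPE{u}{v}\SPE{u}{w}$. Applying part (i) rewrites the left-hand side as $-\SPE{v}{u\times(u\times w)}$, and since the equality holds for every $v$ it forces the diagonal relation $u\times(u\times w)=\SPE{u}{w}u-\SPE{u}{u}w$. The full identity then follows by polarizing this diagonal relation in $u$: replacing $u$ by $u+v$, expanding both sides bilinearly, and subtracting the diagonal identities for $u$ and for $v$ leaves precisely $u\times(v\times w)+v\times(u\times w)=\SPE{u}{w}v+\SPE{v}{w}u-2\SPE{u}{v}w$.

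Every step is a routine polarization once the axioms are in place, and I expect no real obstacle beyond bookkeeping in the final polarization, where the symmetric-in-$(u,v)$ left-hand side must be matched term by term with the right-hand side after the diagonal contributions are removed. The one genuinely substantive move is the use of part (i) to trade the double cross product $u\times(u\times w)$ for an inner product against an arbitrary vector, which is what turns the polarized norm identity into the desired algebraic identity.
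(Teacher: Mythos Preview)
Your argument is correct. The paper itself does not supply a proof of this lemma: it simply records the two identities as standard and cites page~408 of \cite{Calabi}. So there is no ``paper's own proof'' to compare against; you have provided a self-contained derivation where the author was content with a reference.

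For completeness, your steps check out. The polarization of the norm identity in the second slot indeed gives $\SPE{u\times v}{u\times w}=\SPE{u}{u}\SPE{v}{w}-\SPE{u}{v}\SPE{u}{w}$; moving the cross product across via the first identity yields $-\SPE{v}{u\times(u\times w)}$ on the left, and nondegeneracy of the inner product forces $u\times(u\times w)=\SPE{u}{w}u-\SPE{u}{u}w$. Replacing $u$ by $u+v$ and cancelling the two diagonal instances leaves exactly the claimed symmetric formula. The only remark worth adding is that this route uses nothing specific to dimension seven: it works verbatim for any bilinear cross product satisfying the orthogonality and norm axioms (so in particular also for the familiar three-dimensional one), which is consistent with the lemma being a general ``standard'' fact rather than a $G_{2}$-specific computation.
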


In fact $G_{2}$ is exactly the set $\{g\in SO(7)\,|\,g(v\times w)=(gv)\times(gw)\}$.
Next we present 3 lemmas that describe the interaction between the seven dimensional cross product, 
the wedge product and the projection onto $W$.

\begin{lemma}
Let $V$ be a seven dimensional Euclidean space and $\times:V\times V\to V$ be the
seven dimensional cross product. Then
\[\SPE{a\times u}{b\times v}+\SPE{a\times v}{b\times u}=\SPW{a\wedge u}{b\wedge v}+\SPW{a\wedge v}{b\wedge u}\]
for all $a,b,u,v\in V$.
\end{lemma}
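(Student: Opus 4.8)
The plan is to reduce the claimed identity to the second (Jacobi-type) identity in Lemma \ref{CrossProd}. The left-hand side is symmetric under the swap $u\leftrightarrow v$ and under the swap $a\leftrightarrow b$, and so is the right-hand side, so it suffices to understand the bilinear form $(u,v)\mapsto \SPE{a\times u}{b\times v}$ and symmetrize. First I would use the first identity of Lemma \ref{CrossProd}, $\SPE{a\times u}{w}=-\SPE{u}{a\times w}$, to move one cross product across the inner product:
\[
\SPE{a\times u}{b\times v}=-\SPE{u}{a\times(b\times v)}.
\]
Then symmetrizing in $u,v$,
\[
\SPE{a\times u}{b\times v}+\SPE{a\times v}{b\times u}
=-\SPE{u}{a\times(b\times v)}-\SPE{v}{a\times(b\times u)}.
\]

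Next I would like to apply the double cross product identity
\[
a\times(b\times v)+b\times(a\times v)=\SPE{a}{v}b+\SPE{b}{v}a-2\SPE{a}{b}v,
\]
but notice the two terms on the right of the symmetrized expression are $a\times(b\times v)$ and $a\times(b\times u)$, not $a\times(b\times v)$ and $b\times(a\times v)$; so a direct substitution is not immediate. The fix is to also symmetrize in $a,b$ from the start, which we are entitled to do since both sides of the Lemma are symmetric in $a\leftrightarrow b$. Concretely, write $S(a,b,u,v)$ for the left-hand side; then $S$ is symmetric separately in the pair $\{a,b\}$ and the pair $\{u,v\}$, and
\[
S(a,b,u,v)=-\SPE{u}{a\times(b\times v)}-\SPE{v}{a\times(b\times u)}.
\]
Adding to this the same expression with $a,b$ swapped and dividing by $2$, the bracketed terms combine into $a\times(b\times v)+b\times(a\times v)$ and $a\times(b\times u)+b\times(a\times u)$, to which the second identity of Lemma \ref{CrossProd} applies directly. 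Substituting gives
\[
S(a,b,u,v)=-\tfrac12\SPE{u}{\SPE{a}{v}b+\SPE{b}{v}a-2\SPE{a}{b}v}-\tfrac12\SPE{v}{\SPE{a}{u}b+\SPE{b}{u}a-2\SPE{a}{b}u},
\]
which expands to
\[
S(a,b,u,v)=2\SPE{a}{b}\SPE{u}{v}-\SPE{a}{v}\SPE{b}{u}-\SPE{a}{u}\SPE{b}{v}.
\]

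Finally I would compute the right-hand side of the Lemma directly from the formula for the inner product on $\bigwedge^{2}V$: expanding $\SPW{a\wedge u}{b\wedge v}=\SPE{a}{b}\SPE{u}{v}-\SPE{a}{v}\SPE{u}{b}$ and $\SPW{a\wedge v}{b\wedge u}=\SPE{a}{b}\SPE{v}{u}-\SPE{a}{u}\SPE{v}{b}$ and adding, one gets exactly $2\SPE{a}{b}\SPE{u}{v}-\SPE{a}{v}\SPE{b}{u}-\SPE{a}{u}\SPE{b}{v}$, matching $S$. The only slightly delicate point is the bookkeeping in the double symmetrization; I expect that to be the main place a sign or a factor of $2$ could slip, but it is a finite and completely mechanical check, and the target expression is already forced by symmetry and the known value of the $\bigwedge^{2}V$ inner product. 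The result for general $a,b,u,v\in V$ then follows by bilinearity, or one can simply observe that every step above is valid verbatim for arbitrary vectors.
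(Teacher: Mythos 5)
Your proof is correct and uses essentially the same two ingredients as the paper's: the adjoint property $\SPE{v\times u}{w}=-\SPE{u}{v\times w}$ followed by the two-term expansion identity of Lemma \ref{CrossProd}, and your computation of $S(a,b,u,v)=2\SPE{a}{b}\SPE{u}{v}-\SPE{a}{v}\SPE{b}{u}-\SPE{a}{u}\SPE{b}{v}$ and of the right-hand side both check out, signs and factors of two included. The only (harmless) difference is bookkeeping: the paper moves both cross products onto $a$ and then uses antisymmetry of $\times$ to put $u\times(v\times b)+v\times(u\times b)$ directly into the form of the identity, whereas you move them onto $u$ and $v$ and recover that form by an extra symmetrization over $a\leftrightarrow b$.
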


\begin{proof}[\pro]
Using Lemma \ref{CrossProd} we find that
\begin{align*}
\SPE{a\times u}{b\times v}+\SPE{a\times v}{b\times u}&=\SPE{a}{u\times(b\times v)}
+\SPE{a}{v\times (b\times u)}\\
&=\SPE{a}{u\times(b\times v)+v\times (b\times u)}\\
&=-\SPE{a}{u\times(v\times b)+v\times (u\times b)}\\
&=-\SPE{a}{\SPE{u}{b}v+\SPE{b}{v}u-2\SPE{u}{v}b}\\
&=\SPW{a\wedge u}{b\wedge v}+\SPW{a\wedge v}{b\wedge u}.\qedhere
\end{align*}
\end{proof}

\begin{lemma}
Let $V$ be a seven dimensional Euclidean space and
$\times:V\times V\to V$ be the seven dimensional cross-product.
Define the operator $L:V\to \so(V)$, $v\mapsto L_{v}$, where
\[L_{v}(w)=v\times w\]
for $v,w\in V$.
Then $L(V)=R(W)$ and $\SPS{L_{v}}{L_{w}}=6\SPE{v}{w}$.
\end{lemma}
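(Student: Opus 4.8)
The plan is to prove the two assertions in turn, using the second identity of Lemma~\ref{CrossProd} together with the two preceding lemmas.

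\textbf{Step 1: The norm identity $\SPS{L_{v}}{L_{w}}=6\SPE{v}{w}$.} By definition of the scalar product on $\so(V)$ and of $L$, we have
\[\SPS{L_{v}}{L_{w}}=\sum_{i}\SPE{v\times e_{i}}{w\times e_{i}},\]
for any orthonormal basis $\{e_{i}\}$. I would rewrite each term using $\SPE{v\times e_{i}}{w\times e_{i}}=\SPE{v}{e_{i}\times(w\times e_{i})}$ from the skew-symmetry in Lemma~\ref{CrossProd}, and then apply the second identity of that lemma with $u=e_{i}$, $w$ kept, and the third slot also $e_{i}$: this gives $e_{i}\times(w\times e_{i})+w\times(e_{i}\times e_{i})=\SPE{e_{i}}{e_{i}}w+\SPE{w}{e_{i}}e_{i}-2\SPE{e_{i}}{w}e_{i}$, i.e. $e_{i}\times(w\times e_{i})=w-\SPE{w}{e_{i}}e_{i}$ since $e_{i}\times e_{i}=0$. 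Summing over $i$ yields $\sum_{i}e_{i}\times(w\times e_{i})=7w-w=6w$, hence $\SPS{L_{v}}{L_{w}}=\SPE{v}{6w}=6\SPE{v}{w}$. (Alternatively, one could get the same constant by taking the trace of the previous lemma's identity, but the direct computation is cleanest.)

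\textbf{Step 2: The equality $L(V)=R(W)$.} Since $\dim L(V)=7=\dim W=\dim R(W)$ (the map $R$ is an isomorphism and $L$ is injective because $L_{v}(w)=v\times w$ is nonzero for generic $w$ when $v\neq 0$), it suffices to show one inclusion, say $L(V)\subseteq R(W)$, or equivalently that $L(V)$ is orthogonal to $R(\g_{2})$ inside $\so(V)=R(\bigwedge^{2}V)=R(\g_{2})\oplus R(W)$. Using Lemma (i) of the displayed lemma, for $A=L_{v}$ and a decomposable element $a\wedge b$ we have $\SPS{L_{v}}{R(a\wedge b)}=2\SPE{L_{v}(a)}{b}=2\SPE{v\times a}{b}$. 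So I must show this pairing vanishes whenever $R(a\wedge b)$ lies in the $\g_{2}$-summand; equivalently, that the trilinear form $(v,a,b)\mapsto\SPE{v\times a}{b}$, which is totally skew-symmetric (that is the associative $3$-form $\varphi$ defining $G_{2}$), when viewed as a map $\bigwedge^{2}V\to V$ sending $a\wedge b$ to the dual of $\SPE{\,\cdot\times a}{b}$, has image exactly the $W$-complement. The clean way: the composite $\bigwedge^{2}V\xrightarrow{R^{-1}\circ L^{t}} V$ (sending $a\wedge b$ to $a\times b$) is a $G_{2}$-equivariant map, so its kernel is a $G_{2}$-subrepresentation of $\bigwedge^{2}V$; since $a\times b$ is not identically zero it is not all of $\bigwedge^{2}V$, and since $\dim\bigwedge^{2}V-\dim V=14=\dim\g_{2}$, the kernel must be the $14$-dimensional summand $\g_{2}$. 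Therefore $L_{v}$ is orthogonal to $R(\g_{2})$ for all $v$, giving $L(V)\subseteq R(W)$, and equality by dimension count.

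\textbf{Main obstacle.} The genuinely content-carrying step is Step~2: identifying which of the two summands of $\bigwedge^{2}V$ is annihilated by $(a,b)\mapsto a\times b$. The equivariance argument settles this cleanly once one knows $a\times b\not\equiv 0$ and the dimension bookkeeping $7+14=21=\binom{7}{2}$, together with the fact that the cross product is $G_{2}$-invariant (stated after Lemma~\ref{CrossProd}). The remaining computations — the sum $\sum_i e_i\times(w\times e_i)=6w$ in Step~1 and the two applications of the earlier lemma — are routine.
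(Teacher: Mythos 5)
Your proof is correct. Step 1 is essentially the paper's own computation (the paper writes it via $e_{i}\times(e_{i}\times w)$ rather than $e_{i}\times(w\times e_{i})$, but it is the same trace of the double cross product giving $7-1=6$). Step 2, however, takes a genuinely different route to $L(V)=R(W)$. The paper computes $\hat{\rho}(g)L_{v}=L_{\rho(g)v}$ directly from the $G_{2}$-invariance of $\times$, concludes that $L(V)$ is a $7$-dimensional invariant subspace of $\so(V)$, and identifies it with $R(W)$ because the only invariant subspaces of $\g_{2}\oplus W$ are sums of the two non-isomorphic irreducible summands. You instead prove the orthogonality $L(V)\perp R(\g_{2})$ by combining part (i) of the preceding lemma with total skew-symmetry of $(v,a,b)\mapsto\SPE{v\times a}{b}$ to get $\SPS{L_{v}}{R(\omega)}=2\SPE{v}{T(\omega)}$ for $T(a\wedge b)=a\times b$, and then identify $\Ker(T)=\g_{2}$ by equivariance of $T$ plus the dimension count $21-7=14$. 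Both arguments rest on the same representation-theoretic input (equivariance of $\times$ and the decomposition $\bigwedge^{2}V=\g_{2}\oplus W$ into irreducibles of distinct dimensions), and both are valid; the paper's is slightly more economical here, while yours has the bonus of establishing the characterization $\g_{2}=\Ker(a\wedge b\mapsto a\times b)$, which is essentially the content needed for the next lemma of the paper ($P_{R(W)}(R(a\wedge b))=\tfrac{1}{3}L_{a\times b}$), so the two lemmas could be merged along your lines. Two small points worth tightening: anticommutativity of $\times$ (needed for total skew-symmetry of $\varphi$ and for $T$ to descend to $\bigwedge^{2}V$) should be cited as part of the definition of the cross product, since the stated Lemma on $\times$ only gives skewness in the last two slots; and the injectivity of $L$ is cleaner via $|v\times w|^{2}=|v|^{2}|w|^{2}-\SPE{v}{w}^{2}$, or simply as the paper phrases it, than via "nonzero for generic $w$".
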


\begin{proof}[\pro]
For the first part, $L_{v}=0$ means that $v\times w=0$ for all $w$ which implies $v=0$, so $L$
is injective and therefore $L(V)$ is seven dimensional. Now
\begin{align*}
(\hat{\rho}(g)L_{v})(w)&=(\rho(g)L_{v}\rho(g)^{-1})(w)\\
&=\rho(g)(v\times(\rho(g)^{-1}w))\\
&=(\rho(g)v)\times w\\
&=L_{\rho(g)v}(w).
\end{align*}
This means that $\hat{\rho}(g)L_{v}\in L(V)$ for all $g\in G_{2}$ and $v\in V$ thus $L(V)$ is a invariant
$7$-dimensional subspace of $\so(V)$. Since $\g_{2}$ is an irreducible $14$-dimensional invariant
subspace we must have $L(V)=R(W)$.

For the second part, let $\{e_{i}\}_{i=1}^{7}$ be an orthonormal basis for $V$. Then
\begin{align*}
\SPS{L_{v}}{L_{w}}&=\sum_{i}\SPE{L_{v}(e_{i})}{L_{w}(e_{i})}\\
&=\sum_{i}\SPE{v\times e_{i}}{w\times e_{i}}\\
&=\sum_{i}\SPE{e_{i}\times v}{e_{i}\times w}\\
&=-\sum_{i}\SPE{v}{e_{i}\times (e_{i}\times w)}\\
&=-\sum_{i}\SPE{v}{-\SPE{e_{i}}{e_{i}}w+\SPE{e_{i}}{w}e_{i}}\\
&=\sum_{i}(\SPE{v}{w}\SPE{e_{i}}{e_{i}}-\SPE{v}{e_{i}}\SPE{w}{e_{i}})\\
&=7\SPE{v}{w}-\SPE{v}{w}\\
&=6\SPE{v}{w}.\qedhere
\end{align*}
\end{proof}

\begin{lemma}
Let $V$ be a seven dimensional Euclidean space and
$\times:V\times V\to V$ be the seven dimensional cross-product.
Let $a,b\in V$ then
\[P_{R(W)}(R(a\wedge b))=\frac{1}{3}L_{a\times b}.\]
\end{lemma}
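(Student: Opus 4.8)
The plan is to compute $P_{R(W)}(R(a\wedge b))$ by pairing it against the basis $L_v$ of $R(W)$ provided by the previous lemma, since we now know $R(W)=L(V)$ with the convenient inner-product normalization $\SPS{L_v}{L_w}=6\SPE{v}{w}$. Concretely, write $P_{R(W)}(R(a\wedge b))=L_x$ for some $x\in V$ to be determined; then for every $w\in V$ we have
\[
6\SPE{x}{w}=\SPS{L_x}{L_w}=\SPS{P_{R(W)}(R(a\wedge b))}{L_w}=\SPS{R(a\wedge b)}{L_w},
\]
where the last equality uses that $L_w\in R(W)$ so the projection can be dropped. So the whole computation reduces to evaluating $\SPS{R(a\wedge b)}{L_w}$ and checking it equals $2\SPE{a\times b}{w}$, which would give $x=\tfrac13\,a\times b$ as claimed.

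The key computation is therefore $\SPS{R(a\wedge b)}{L_w}$. Using part (i) of the earlier Lemma with $A=L_w$, we get $\SPS{L_w}{R(a\wedge b)}=2\SPE{L_w(a)}{b}=2\SPE{w\times a}{b}$. Now by the antisymmetry relation $\SPE{v\times u}{w}=-\SPE{u}{v\times w}$ from Lemma \ref{CrossProd} (applied as $\SPE{w\times a}{b}=-\SPE{a}{w\times b}=\SPE{a}{b\times w}=\SPE{a\times b}{w}$, again using the same antisymmetry), this equals $2\SPE{a\times b}{w}$. Combining with the displayed identity above, $6\SPE{x}{w}=2\SPE{a\times b}{w}$ for all $w$, hence $x=\tfrac13\,a\times b$ and $P_{R(W)}(R(a\wedge b))=\tfrac13 L_{a\times b}$.

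The only point requiring care is the justification that $\SPS{P_{R(W)}(A)}{B}=\SPS{A}{B}$ when $B\in R(W)$: this is just the defining property of an orthogonal projection (self-adjointness of $P_{R(W)}$ together with $P_{R(W)}B=B$), valid because $\bigwedge^2 V=\g_2\oplus W$ is an orthogonal direct sum and $R$ is (up to scaling) an isometry, so $\so(V)=R(\g_2)\oplus R(W)$ is orthogonal as well. I expect no real obstacle here — the main thing is to get the sign and the factor right in the chain $\SPE{w\times a}{b}=\SPE{a\times b}{w}$, which follows from a single application of the antisymmetry of the cross product (alternatively one can invoke that $\SPE{u\times v}{w}$ is totally antisymmetric, i.e. the associated $3$-form). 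Everything else is bookkeeping with the two inner-product lemmas already established.
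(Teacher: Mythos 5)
Your proposal is correct and follows essentially the same route as the paper: both determine $P_{R(W)}(R(a\wedge b))$ by pairing against the vectors $L_w$ spanning $R(W)$ (using the normalization $\SPS{L_v}{L_w}=6\SPE{v}{w}$) and then reduce $\SPS{R(a\wedge b)}{L_w}$ to $2\SPE{a\times b}{w}$ via the antisymmetry of $\SPE{u\times v}{w}$. The only cosmetic difference is that you invoke part (i) of the earlier lemma, $\SPS{A}{R(a\wedge b)}=2\SPE{A(a)}{b}$, to get this pairing in one line, whereas the paper recomputes it from the trace inner product as an explicit double sum over an orthonormal basis.
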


\begin{proof}[\pro]
Let $\{e_{k}\}$ be an orthonormal basis for $V$. Then $\{\frac{1}{\sqrt{6}}L_{e_{k}}\}$ is
an orthonormal basis for $R(W)$ so
\begin{align*}
P_{R(W)}(R(a\wedge b))&=\frac{1}{6}\sum_{k}\SPS{R(a\wedge b)}{L_{e_{k}}}L_{e_{k}}\\
&=\frac{1}{6}\sum_{i,k}\SPE{R(a\wedge b)(e_{i})}{e_{k}\times e_{i}}L_{e_{k}}\\
&=-\frac{1}{6}\sum_{i,k}\SPE{R(a\wedge b)(e_{i})}{e_{i}\times e_{k}}L_{e_{k}}\\
&=\frac{1}{6}\sum_{i,k}\SPE{e_{i}\times R(a \wedge b)(e_{i})}{e_{k}}L_{e_{k}}\\
&=\frac{1}{6}\sum_{i,k}\SPE{e_{i}\times(\SPE{a}{e_{i}}b-\SPE{b}{e_{i}}a)}{e_{k}}L_{e_{k}}\\
&=\frac{1}{6}\sum_{k}\SPE{\sum_{i}(\SPE{a}{e_{i}}e_{i}\times b-\SPE{b}{e_{i}}e_{i}\times a)}{e_{k}}L_{e_{k}}\\
&=\frac{1}{6}\sum_{k}\SPE{2a\times b}{e_{k}}L_{e_{k}}\\
&=\frac{1}{3}L_{a\times b}.\qedhere
\end{align*}
\end{proof}

Thus for all $a,b,c,d\in V$
\begin{align*}
Q(a\wedge b,c\wedge d)&=\frac{1}{4}\SPS{P_{\g_{2}}(R(a\wedge b))}{R(c\wedge d)}\\
&=\frac{1}{4}\SPS{R(a\wedge b)-P_{R(W)}(R(a\wedge b))}{R(c\wedge d)}\\
&=\frac{1}{4}\left(\SPS{R(a\wedge b)}{R(c\wedge d)}-\SPS{P_{R(W)}(R(a\wedge b))}{P_{R(W)}(R(c\wedge d))}\right)\\
&=\frac{1}{2}\SPW{a\wedge b}{c\wedge b}-\frac{1}{4}\SPS{\frac{1}{3}L_{a\times b}}{\frac{1}{3}L_{c\times d}}\\
&=\frac{1}{2}\SPW{a\wedge b}{c\wedge b}-\frac{1}{6}\SPE{a\times b}{c\times d}.
\end{align*}

The next result details how $Q$ works on the set $\{a\wedge p\,|\,a\in V^{\cn}\}$ where
$p\in V^{\cn}$ is given.

\begin{theorem}\label{EigenOnG2}
Let $\rho:G_{2}\to \Aut(V)$ be the standard representation of $G_{2}$ and $\sigma:\g_{2}\to\End(V)$ be
the corresponding representation of its Lie algebra $\g_{2}$. Define $Q:\bigwedge^{2}V^{\cn}\to\bigwedge^{2}V^{\cn}$ by
\[Q(a\wedge b,c\wedge d)=\sum_{i=1}^{14}\SPE{\sigma(X_{i})a}{b}\SPE{\sigma(X_{i})c}{d}\]
for $a,b,c,d\in V^{\cn}$, where $\{X_{i}\}_{i=1}^{14}$ is an orthonormal basis for $\g_{2}$. Then
\[Q(a\wedge p,b\wedge p)=\frac{1}{3}\SPW{a\wedge p}{b\wedge p}\]
for all $a,b,p\in V^{\cn}$.
\end{theorem}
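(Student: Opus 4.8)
The plan is to combine the two formulas already established: the general decomposition
\[
Q(a\wedge b,c\wedge d)=\tfrac12\SPW{a\wedge b}{c\wedge d}-\tfrac16\SPE{a\times b}{c\times d}
\]
and the cross-product identities in Lemma \ref{CrossProd}. First I would substitute $c\wedge d=b\wedge p$ and $a\wedge b=a\wedge p$ in this formula, so that the whole problem reduces to understanding the single scalar $\SPE{a\times p}{b\times p}$ in terms of the inner-product data $\SPE{a}{b}$, $\SPE{a}{p}$, $\SPE{b}{p}$ and $\SPE{p}{p}$. This is precisely the kind of expression controlled by the previous lemma on $\SPE{a\times u}{b\times v}+\SPE{a\times v}{b\times u}$, but here we need the ``diagonal'' value with $u=v=p$, which is even cleaner.

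The key computation is therefore: setting $u=v=p$ in the identity
\[
\SPE{a\times u}{b\times v}+\SPE{a\times v}{b\times u}=\SPW{a\wedge u}{b\wedge v}+\SPW{a\wedge v}{b\wedge u}
\]
gives $2\SPE{a\times p}{b\times p}=2\SPW{a\wedge p}{b\wedge p}$, i.e. $\SPE{a\times p}{b\times p}=\SPW{a\wedge p}{b\wedge p}$. (Equivalently, one can rederive this directly from Lemma \ref{CrossProd}: $\SPE{a\times p}{b\times p}=\SPE{a}{p\times(b\times p)}=-\SPE{a}{p\times(p\times b)}$, and the second identity of Lemma \ref{CrossProd} with $u=v=p$ yields $2\,p\times(p\times b)=2\SPE{p}{b}p-2\SPE{p}{p}b$, so $\SPE{a\times p}{b\times p}=\SPE{p}{p}\SPE{a}{b}-\SPE{a}{p}\SPE{b}{p}=\SPW{a\wedge p}{b\wedge p}$.) Everything is complex-bilinear, so it extends verbatim to $a,b,p\in V^{\cn}$; in particular no assumption $\SPE{p}{p}=0$ is needed at this stage.

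Plugging this back in, I get
\[
Q(a\wedge p,b\wedge p)=\tfrac12\SPW{a\wedge p}{b\wedge p}-\tfrac16\SPW{a\wedge p}{b\wedge p}=\tfrac13\SPW{a\wedge p}{b\wedge p},
\]
which is exactly the claim. I should also note that the operator $Q$ in the statement of Theorem \ref{EigenOnG2} is literally the restriction (to $\bigwedge^{2}V$) of the $Q$ from the previous section, so the displayed decomposition formula applies directly.

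I do not expect a genuine obstacle here; the result is essentially a bookkeeping consequence of material already set up. The only point requiring a little care is making sure the sum-version of the cross-product identity from the earlier lemma is applied with the correct substitution ($u=v=p$, which halves both sides) rather than misremembering it as already being in diagonal form; doing the direct derivation from Lemma \ref{CrossProd} as a cross-check removes any doubt. One might also want to remark explicitly that $\SPW{a\wedge p}{b\wedge p}=\SPE{a}{b}\SPE{p}{p}-\SPE{a}{p}\SPE{b}{p}$ so that, once combined with the hypothesis $\SPE{p}{p}=0$ in Theorem \ref{HowToEigenFam}, the conformality eigenvalue comes out to $\mu=-\tfrac13$ after the sign flip in that theorem's proof.
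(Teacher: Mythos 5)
Your proposal is correct and takes essentially the same route as the paper: both rest on the decomposition $Q(a\wedge b,c\wedge d)=\tfrac12\SPW{a\wedge b}{c\wedge d}-\tfrac16\SPE{a\times b}{c\times d}$ combined with the cross-product lemma. The only difference is organizational — the paper expands $p=u+iv$ and applies the lemma to the real, imaginary and cross terms separately, whereas you polarize once to obtain $\SPE{a\times p}{b\times p}=\SPW{a\wedge p}{b\wedge p}$ for complexified $p$ directly, a slightly cleaner bookkeeping of the identical computation.
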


\begin{proof}[\pro]
Let $p=u+i v\in V^{\cn}$ and $a,b\in V$. Then
\begin{align*}
Q(a\wedge(u+i v),b\wedge(u+i v))=&Q(a\wedge u,b\wedge u)
-Q(a\wedge v,b\wedge v)\\
&+i(Q(a\wedge u,b\wedge v)+Q(a\wedge v,b\wedge u))\\
=&(\frac{1}{2}\SPW{a\wedge u}{b\wedge u}-\frac{1}{6}\SPE{a\times u}{b\times u})\\
&-(\frac{1}{2}\SPW{a\wedge v}{b\wedge v}-\frac{1}{6}\SPE{a\times v}{b\times v})\\
&+i(\frac{1}{2}\SPW{a\wedge u}{b\wedge v}-\frac{1}{6}\SPE{a\times u}{b\times v})\\
&+i(\frac{1}{2}\SPW{a\wedge v}{b\wedge u}-\frac{1}{6}\SPE{a\times v}{b\times u})\\
=&(\frac{1}{2}\SPW{a\wedge u}{b\wedge u}-\frac{1}{6}\SPE{a\times u}{b\times u})\\
&-(\frac{1}{2}\SPW{a\wedge v}{b\wedge v}-\frac{1}{6}\SPE{a\times v}{b\times v})\\
&+i(\frac{1}{2}\SPW{a\wedge u}{b\wedge v}+\frac{1}{2}\SPW{a\wedge v}{b\wedge u})\\
&-i(\frac{1}{6}\SPE{a\times u}{b\times v}+\frac{1}{6}\SPE{a\times v}{b\times u})\\
=&\frac{1}{3}\SPW{a\wedge u}{b\wedge u}-\frac{1}{3}\SPW{a\wedge v}{b\wedge v}\\
&+i(\frac{1}{3}\SPW{a\wedge u}{b\wedge v}+\frac{1}{3}\SPW{a\wedge v}{b\wedge u})\\
=&\frac{1}{3}\SPW{a\wedge (u+i v)}{b\wedge (u+i v)}.
\end{align*}
Since the expression is complex linear in $a,b$ the formula holds for $a,b\in V^{\cn}$ as well.
\end{proof}

We get our main result Theorem \ref{MainThm} by combining
Theorem \ref{HowToEigenFam} and Theorem \ref{EigenOnG2}. 

\section{Acknowledgments}
Thanks to Martin Svensson for suggesting the seven-dimensional cross-product.



\begin{thebibliography}{99}

\bibitem{Bai-Woo-book}
P. Baird, J.C. Wood,
\textit{Harmonic morphisms between Riemannian manifolds},
London Math. Soc. Monogr. {\bf 29}, Oxford Univ. Press (2003).

\bibitem{Calabi}
E. Calabi,
{\it Construction and properties of some 6-dimensional almost complex manifolds},
Trans. Amer. Math. Soc. {\bf 87} (1958), 407-438.

\bibitem{Fuglede}
B. Fuglede,
\textit{Harmonic morphisms between Riemannian manifolds},
Ann. Inst. Fourier {\bf 28}, 107-144 (1978).

\bibitem{Fulton-Harris}
W. Fulton, J. Harris,
\textit{Representation Theory: A First Course},
Graduate Text in Mathematics \textbf{129}, Springer, 2004.

\bibitem{GudSack}
S. Gudmundsson, A. Sackovich,
{\it Harmonic morphisms from the classical compact semisimple Lie groups},
Ann. Glob. Anal. Geom. {\bf 33} (2008), 343-356.

\bibitem{Gud-Sven-Ville}
S. Gudmundsson, M. Svensson, M. Ville,
{\it Complete minimal submanifolds of compact Lie groups},
Preprint(2013)

\bibitem{Gud-bib}
S. Gudmundsson, {\it The Bibliography of Harmonic
Morphisms}, {\tt http://www.matematik.lu.se/\\
matematiklu/personal/sigma/harmonic/bibliography.html}

\bibitem{Harvey}
R. Harvey,
{\it Spinors and calibrations},
Perspectives in Mathematics {\bf 9}, Academic Press, 1990.

\bibitem{Ishihara}
T. Ishihara,
\textit{A mapping of Riemannian manifolds which preserves harmonic functions},
J. Math. Kyoto Univ. {\bf 19}, 215-229 (1979).

\bibitem{Knapp}
A. Knapp,
\textit{Lie Groups Beyond an Introduction},
Progress in Mathematics \textbf{140}, Birkhäuser, 1996.

\end{thebibliography}
\end{document}